\newtheorem{theorem}{Theorem}
\newtheorem{lemma}{Lemma}
\newtheorem{assumption}{Assumption}
\newenvironment{customthm}[1]
  {\innercustomthm}
{\endinnercustomthm}
\newenvironment{customlemma}[1]
  {\innercustomlemma}
{\endinnercustomlemma}
\newenvironment{customassum}[1]
  {\innercustomassum}
{\endinnercustomassum}  
\theoremstyle{definition}
\newtheorem{definition}{Definition}
\theoremstyle{remark}
\newcommand{\V}{\widetilde{V}}
\newcommand{\Y}{\widetilde{Y}}
\newcommand{\B}{\widetilde{B}}
\newcommand{\C}{\mathcal{C}}
\newcommand{\mathcQ}{\mathcal{Q}}
\newcommand{\bY}{\widetilde{Y}}
\newcommand{\eqdef}{:=}
\newcommand{\EE}[2]{{\mathbb E}_{#1}\left[#2\right] } 
\newcommand{\wlambda}{\widetilde{\lambda}}
\newcommand{\cmark}{\ding{51}}%
\newcommand{\xmark}{\ding{55}}%
\title{FLECS: A Federated Learning Second-Order Framework via Compression and Sketching}
\author[1,2]{Artem Agafonov} \author[2,1]{Dmitry Kamzolov} \author[3]{Rachael Tappenden} \author[1,4,5]{Alexander Gasnikov} \author[2]{Martin Tak\'a\v{c}}
\affil[1]{Moscow Institute of Physics and Technology, Dolgoprudny, Russia}
\affil[2]{Mohamed bin Zayed University of Artificial Intelligence, Abu Dhabi, UAE}
\affil[3]{University of Canterbury, Christchurch 8041, New Zealand}
\affil[4]{Institute for Information Transmission Problems, Moscow, Russia}
\affil[5]{HSE University, Moscow, Russia}
\begin{document}

\setlength{\abovedisplayskip}{3pt}
\setlength{\belowdisplayskip}{3pt}

\maketitle

\begin{abstract}

Inspired by the recent work FedNL (Safaryan et al, FedNL: Making Newton-Type Methods Applicable to Federated Learning), we propose a new communication efficient second-order framework for Federated learning, namely FLECS. The proposed method reduces the high-memory requirements of FedNL by the usage of an L-SR1 type update for the Hessian approximation which is stored on the central server. A low dimensional `sketch' of the Hessian is all that is needed by each device to generate an update, so that memory costs as well as number of Hessian-vector products for the agent are low. Biased and unbiased compressions are utilized to make communication costs also low. Convergence guarantees for FLECS are provided in both the strongly convex, and nonconvex cases, and local linear convergence is also established under strong convexity. Numerical experiments confirm the practical benefits of this new FLECS algorithm.


\end{abstract}

\section{Introduction}
    This paper presents a novel second-order method for Federated Learning called FLECS. In this setup, multiple agents/devices/workers collaborate to solve a machine learning problem by communicating over a central node (server). It is assumed that the central node is a large and powerful machine, but the agents/devices are much smaller (for example, a cellphone, or laptop). Moreover, raw data is stored locally on each worker and cannot be exchanged or transferred. The finite sum minimization, or Federated Learning, problem can be formalized in the following way:
    \begin{equation}\label{eq:problem}
        \textstyle{\min}_{w \in \R^d} \lb F(w):= \tfrac{1}{n}\textstyle{\sum}_{i=1}^n f_i(w)\rb,
    \end{equation}
    where $f_i(w)$ is the loss function (parametrized by $w \in \R^d$) associated with the data stored on the $i$-th machine, $F(w)$ is the empirical loss function, and both the problem dimension $d$, and the number of machines $n$, can be large.
\\[2pt]
Problems of the form \eqref{eq:problem} arise not only in distributed machine learning \cite{kra13, li2014scaling}, but also in robotics, resource allocation, power system control, control of drone or satellite networks, distributed statistical inference and optimal transport, and multi-agent reinforcement learning \cite{xia06, rab04, nedic2017fast, dvurechenskii2018decentralize, uribe2018distributed, ram2009distributed}.
    \\[2pt]
    A variety of first-ordered methods have been introduced for the problem \eqref{eq:problem} under the stated computational set-up (a central server connected to multiple smaller devices). Early work includes that of \cite{Konecny2016a,Konecny2016b,McMahan2017}, which focused on distributed first order optimization algorithms that were designed with the computational structure in mind. Recent advances include works focusing on reducing the  communication burden via compression techniques \cite{alistarh2017qsgd,horvath2019natural,chen2022distributed}, including momentum for algorithm speed up \cite{Mishchenko2019,Li2020accFL}, variance reduction techniques \cite{Horvath2019}, the use of adaptive learning rates \cite{Xie2019}, and works that focus on maintaining client data privacy \cite{Kairouz2019}.
    \\[2pt]
    Beyond first-order methods, second-order methods have also been developed for Federated Learning. Some of these algorithms \cite{Zhang2018,dvurechensky2021hyperfast,daneshmand2021newton,bullins2021stochastic, agafonov21acc} utilize statistical similarity (the homogeneous data setting), which, roughly speaking, means that the local functions $f_i$ are similar, and they approximate the overall empirical risk function $F$ well. FedNL \cite{safaryan2021fednl} for the  second-order method for Federated Learning in the case of \emph{heterogeneous} data. While FedNL makes a valuable step toward the applicability of second-order methods to Federated Learning, this framework seems to be impractical for large-scaled problems, since $d \times d$ Hessian approximations are stored locally on workers and transferred to the server via compressed  matrix communications. (This violates the assumption in the current work, that the individual workers have small memory.)
    \\[2pt]
The goal here is to build upon the work in \cite{safaryan2021fednl} and develop a new second order framework for Federated Learning that is based upon more realistic assumptions about the available computational architecture. That is, we assume that individual workers are devices with relatively low memory availability and computational capacity (e.g. mobile phones), but they can connect to a large and powerful central machine. The typical situation, that downloading is much faster than uploading, is also assumed. \footnote{ 
An example of such a setting is (e.g., battery powered) wireless network \cite{rajaravivarma2003overview,
yick2008wireless}
where the cost of sending data from devices to the centralized server is energy demanding, implying severely limited communication bandwidth to the server \cite{he2021cell,chen2021fedsvrg}.
On the other hand, the server is usually connected to an energy network and has no constraints for sending data to other devices in the network.}
We introduce the FLECS second-order framework for strongly convex and non-convex optimization,
which places no additional memory on workers and reduces the number Hessian-vector products per node. Therefore, our algorithm can be viewed as the lightweight FedNL, since it provides similar theoretical guarantees and has almost the same global. FLECS uses sketching \cite{woodruff2014sketching} and matrix compression for efficient communication and allows for several Hessian learning techniques (including truncated LSR1 update) and two iterate updates, namely Truncated inverse Hessian approximation and FedSONIA, inspired by \cite{paternain2019newton} and \cite{jahani2021sonia}.

    \subsection{Contributions}\label{subsec:contribution}
    We propose FLECS: a \textbf{F}ederated \textbf{LE}arning Second-Order Framework via \textbf{C}ompression and \textbf{S}ketching with the following properties.\vspace{-2mm}
    \begin{itemize}[leftmargin=10pt,nolistsep]      
    
    \item \textbf{No additional storage on workers.}
        In large-scale machine learning problems, it is impossible to store the $d \times d$-dimensional Hessian approximations. For a dataset with $d = 50,000$ features, the local Hessian approximation requires more than $10$ GB of memory. For FLECS, the Hessian approximations are stored on the server, so memory requirements for the individual workers remain low.
        \item \textbf{Efficient communication via sketching and compression.}
        To reduce the cost of communication, we use a sketching technique. This allows the exchange of $d \times m$ matrices instead of $d \times d$, where $m \ll d$ is the user defined memory size, which could be set as $m=1$. Moreover, we exploit compression to make the algorithm even more communication-efficient. This strategy heavily improves the one in \cite{safaryan2021fednl}, since we decrease the dimension of the transferred matrices $d/m$ times. Moreover, sketching  reduces the number of Hessian-vector products from $d$ to $m$. 
        \item \textbf{Heterogeneous data setting.} Our framework is flexible and does not assume that data on the workers is independent and identically distributed.
        \item \textbf{Hessian Learning via Quasi-Newton updates.}
        Motivated by quasi-Newton methods, sketching techniques and inverse Hessian truncation, we propose the Truncated L-SR1 update and Direct update for our Hessian approximations. 
        \item \textbf{Theoretical Analysis.} Convergence guarantees in both the strongly convex, and nonconvex cases, are provided, as well as a local linear rate  of convergence under strong convexity independent of problem conditioning.
        \item \textbf{Competitive Numerical Experiments.} Our numerical experiments highlight the practical benefits of our proposed framework.
    \end{itemize}
    \begin{table}
    \centering
    \caption{Comparison between FedNL\cite{safaryan2021fednl} and FLECS.}
        \begin{tabular}{cccc}
        \toprule
         & FedNL & FedNL-LS & FLECS    \\
         &  \cite{safaryan2021fednl} & \cite{safaryan2021fednl} &  [this paper]   \\
        \midrule
        heterogeneous data setting & \cmark & \cmark & \cmark \\
        additional storage on workers & $O(d^2)$ & $O(d^2)$ & $0$\\
        supports biased and unbiased compression & \cmark & \cmark & \cmark\\
        communication cost (omitting compression) & $O(d^2)$ & $O(d^2)$ & $O(md) $\\
        Hessian-vector products per iteration on worker & $d$ & $d$ & $m$\\
        complexity per iteration on the server & $O(d^3)$ & $O(d^3)$ &\begin{tabular}{@{}c@{}}$O(d^3)$~(Alg. \ref{alg:approx_trunc}) \\ $O(nmd^2)$~(Alg. \ref{alg:approx_fedsonia}) \end{tabular}  \\
        convergence in non convex case   & \xmark & \xmark & \cmark \\
        strongly convex case: global convergence  &  \xmark &  linear & linear \\
        strongly convex case: local convergence  & superlinear & \xmark & linear (Alg. \ref{alg:approx_trunc}) \\
        \bottomrule
        \end{tabular}
        \vspace{-10pt}
    \end{table}
    \vspace{-10pt}
   \paragraph{Organization.}
    The remainder of the paper is organized as follows. Related works are described in Section \ref{sec:related}. Section \ref{sec:algo} presents the proposed framework, including all algorithmic details. In Section~\ref{sec:theory}, we establish theoretical convergence guarantees (all proofs can be found in the appendix). Numerical experiments are provided in Section \ref{sec:experiments} (additional experiments are provided in the appendix), and concluding remarks and potential future research directions are given in Section~\ref{sec:conclusion}.
    
\vspace{-2mm}

    \section{Related Work}\label{sec:related}
    
    Second-order methods are well established in the optimization literature. Such methods employ the Hessian (or an approximation to it) and thereby important (possibly approximate and/or partial) curvature information is available for utilization. The benefits include `better' search directions, and faster rates of convergence that may be independent of the condition number, although the algorithms often have higher memory and computational requirements than first order methods. To balance the trade off between improved convergence rates versus increased costs, approaches that involve modifying or approximating the Hessian have been proposed, and several of these are discussed now.
\\[2pt]    
    Arguably, some of the most popular algorithms that use approximations to the Hessian are quasi-Newton methods. They employ information from the iterates to build an approximation to the Hessian that improves in accuracy as the algorithm progresses. Among these are algorithms that use BFGS, SR1, and DFP updates \cite{fletcher2013practical, nocedal1999numerical, dennis1977quasi}, which are now considered to be classics in optimization due to their effectiveness and practicality, and also because, under certain conditions,  theoretical results related to the accuracy of the Hessian approximations are available. This motivates the current work, which incorporates quasi-Newton type updates to learn a good approximation to the Hessian. 
    \\[2pt]
    Of particular interest is the SR1 update, which does not guarantee positive definiteness of the Hessian approximation \cite{dennis1977quasi}. Previously, this was considered to be a disadvantage, but in the case of non-convex optimization, it would appear to be advantageous. L-SR1, a limited memory variant of SR1, belongs to the class of limited memory quasi-Newton algorithms \cite{lu1996study, brust2017solving}. These methods store only the last $m$ gradient differences $Y_k = [y_{k-m+1}, \ldots, y_k]$ and iterate differences $S_k = [s_{k-m+1}, \ldots, s_k]$, where $m$ is the memory size, $y_k = \nabla F(w_k) - \nabla F(w_{k-1})$ and $s_k = w_k - w_{k-1}$. The compact form of the L-SR1 update is \cite{byrd1994representations}\vspace{-2mm}
    \begin{equation}
        B_k = B_0 + (Y_k - B_0S_k)M_k^{-1}(Y_k - B_0S_k)^T,
    \end{equation}
    where $M_k = L_k + D_k + L_k^T - S_k^TB_0S_k$, $~S_k^TY_k = L_k + D_{k} + U_k$, $D_{k}$ is diagonal, $L_k$, $U_k$ are strictly lower and upper triangular, and $B_0$ is the initial approximation.
    \\[2pt]
    The work \cite{berahas2021quasi} proposed the use of a sampled $S_k$, rather than one based upon the previous $m$ iterate differences. The idea is, given the current iterate $w_k$, simply sample $m$ random directions, and use the differences with $w_k$ as the columns of $s_k$ (forming $Y_k$ appropriately). An $S_k$ with this form can be viewed as sketching matrix \cite{woodruff2014sketching}, and the Newton Sketch algorithm in \cite{pilanci2017newton}, shows that even randomly projected or a sub-sampled Hessian provides enough curvature information for fast convergence. 
    \\[2pt]
    The Nonconvex Newton method of \cite{paternain2019newton} is a different approach, which is an adaptation of Newton's method for nonconvex problems. At each iteration, the singular value decomposition of the Hessian is computed, small eigenvalues (in absolute value) are truncated, and then a positive definite modification of the Hessian is constructed. This is then used to create Newton-type direction. 
    \\[2pt]
    SONIA \cite{jahani2021sonia} bridges the gap between first-order and second-order methods, combining the ideas of quasi-Newton algorithms, sketching \cite{pilanci2017newton} and truncation \cite{paternain2019newton}. It constructs the search directions using curvature information in one subspace and gradient information in the orthogonal complement.
    \\[2pt]
    Second-order methods for Federated Learning, can generally be divided in two groups, based upon whether a homogeneous or heterogeneous data setting is assumed. Algorithms in the first group rely on the concept of statistical similarity, which, in practice, means that each local function $f_i$ is a `good' approximation of the global objective function $F$. (More formally, $\|\nabla^2 F - \nabla^2 f_i\| \leq \beta$ for some $\beta >0$.) Usually, in this case, methods utilize the part of the data stored on the server and use it to approximate the full Hessian \cite{Zhang2018,dvurechensky2021hyperfast,daneshmand2021newton,bullins2021stochastic, agafonov21acc}. 
    \\[2pt]
    No such statistical similarity  (i.i.d.) assumption is made in the heterogeneous setting, which is perhaps more realistic (e.g., it may be impossible/impractical to have local data (on a mobile phone say), that approximates the data on the server well). The main work in this setting is FedNL \cite{safaryan2021fednl}. The authors proposed a way to learn Hessian approximations and then take an approximate Newton-type step. However, the main drawback is that the Hessian approximations are stored locally and communications performed via compressed $d\times d$ matrices. For large-scale optimization problems, it is impossible to store the $d \times d$ matrix locally, because of memory limitations.
    Extensions of FedNL can be found in \cite{islamov2021distributed}, \cite{qian2021basis}.
    \\[2pt]
    The approach proposed here combines ideas from quasi-Newton methods, sketching, and Hessian learning, and develops a second-order Federated Learning framework with efficient communications and without an additional memory burden on workers. All Hessian approximations are stored on the server, not the workers. Communications are based on a user-defined memory hyperparameter $m$ and utilize compression to reduce communication. 
    Sketching  allows to transfer $d \times m$ matrices and reduce the number of Hessian-vector products on the worker.
    The proposed framework allows for two Hessian learning techniques and two iterate update rules. The Truncated Inverse Hessian approximation is based on the idea of truncating eigenvalues (as in \cite{paternain2019newton}) of Hessian approximate, while FedSONIA is based upon SONIA \cite{jahani2021sonia} and reduces the iteration complexity of the step from $O(d^3)$ to $O(nmd^2)$. 
    
\section{FLECS}\label{sec:algo}
    Inspired by the works \cite{jahani2021sonia, safaryan2021fednl}, we propose a general framework for second-order methods for federated learning problems \eqref{eq:problem} that allows for different forms for the iterates and different Hessian approximations/updates. One of main goals of this framework is to create fast communication-efficient second-order methods that do not place additional memory requirements on the workers. 
\\[2pt]    
    To make broadcasting between the server and nodes cheap we employ sketching and compression. Throughout the paper, $S_k \in \R^{d \times m}$ denotes a sketching matrix, where $m$ is the memory size ($m \ll d$). Any operator that compresses a matrix is denoted by $\mathcal{C}: \R^{d\times m} \to \R^{d\times m}$ 
    , e.g. random dithering, Rank-$R$, Top-$K$, Rand-$K$ (description of compressors can be found in Appendix \ref{app:compressors}). 
    The Hessian approximation of the $i$-th worker at iteration $k$ is denoted by $B_k^i$, and is stored on the central node.
    \\[2pt]
    Our framework can be described in three steps (further details follow in later subsections):
\begin{enumerate}[leftmargin=15pt,noitemsep,topsep=-5pt]
        \item \textbf{Worker step.} Each worker receives the current iterate $w_k$, and their local Hessian approximation multiplied by a sketching matrix, i.e., the product $B_k^i S_k \in \R^{d \times n}$. The matrix is sent directly (i.e., no compression), since, typically, downloading is faster than uploading. Each worker then computes their local Hessian-sketch product $Y_k^i \eqdef \nabla^2 f_i(w_k) S_k$ and sends back the compressed difference $C_k^i \eqdef \C(Y_k^i - B_k^iS_k) \in \R^{d \times m}$.  
        \item \textbf{Hessian approximation update.} The server receives $C_k^i$, $M_k^i \eqdef S_k^T(\nabla^2 f_i(w_k))S_k$, and $\nabla f_i(w_k)$ from all $i = 1, \ldots, n$ workers. Using this information, as well as the current (local) Hessian approximation $B_k^i$, the new approximation $B_{k+1}^i$ is formed. Our framework allows $B_{k+1}^i$ to be computed by a (1) Truncated L-SR1 update, or (2) Direct update. 
        \item \textbf{Iterate update.} The server forms the matrices 
        $$B_k \eqdef  \tfrac{1}{n}\textstyle{\sum}_{i=1}^n B_k^i, ~~ \nabla F(w_k), ~~ M_k = \frac{1}{n}\textstyle{\sum} _{i=1}^n M_k^i,$$
        and calculates the new iterate $w_{k+1}$ via update rule:
        \begin{equation}\label{eq:update_rule}
            w_{k+1} = w_k + \alpha_k p_k
        \end{equation}
        where $\alpha_k > 0$ is the step-size. There are several options for computing the search direction $p_k$, including (1) a truncated inverse Hessian approximation step, or (2) via a SONIA step for Federated Learning (FedSONIA).
        The process then repeats from Step 1.
    \end{enumerate}
    
    \subsection{Initialization}
    The algorithm is initialized with a user defined memory size $m \ll d$, and $n$ matrices $B_0^i \in \R^{d \times d}$ are stored on the central node, each representing an approximation to the local Hessian for the $i$th worker. Various additional parameters are initialized depending on the selected update rule (to be described later). At iteration $k > 0$ of FLECS, $m$ directions $\lb s_1, \ldots, s_m\rb$ are randomly sampled at all nodes (including the server) and combined together, forming the columns of a sketching matrix $S_k \in \R^{d \times m}$. Note that $S_k$ is the same for all machines and the main node; this is \emph{guaranteed} by setting the random seed to be equal to the iteration number $k$ during sampling.
    
    \subsection{Worker Step}\label{sec:workerstep}
    Each of the workers receives $B_k^iS_k$, and the iterate $w_k$, from the server. On each node we compute 
    \begin{equation}\label{eq:node_results}
        C_k^i \eqdef \C(Y_k^i - B_k^iS_k), ~ \nabla f_i(w_k), ~~ M_k^i \eqdef S_k^TY_k^i = S_k^T(\nabla^2 f_i(w_k))S_k, ~~ i = 1, \ldots, n.
    \end{equation}
    The quantities in \eqref{eq:node_results} are used to construct the Hessian approximation on the main node. Since $M_k^i$ is an $m \times m$, it can be sent directly without compression. So too can the vector $\nabla f_i(w_k)$. For the matrix $Y_k^i$ we use the compression operator $\C$. In order to make the variance of this compression low, we use an error-feedback technique and send $C_k^i = \C(Y_k^i - B_k^iS_k).$
    \subsection{Hessian Approximation update}
    The server receives \eqref{eq:node_results} from each worker $i=1, \ldots, n$. Firstly, we restore an approximation to $Y_k^i$ as 
    \begin{equation}\label{eq:Y_tilda}
        \Y_k^i = C_k^i + B_k^iS_k.
    \end{equation}
    Then, we apply one of the following options to update the Hessian approximation.
     
    \textbf{Hessian update option 1. Truncated L-SR1 update (Algorithm \ref{alg:hess_lsr1}).} 
    In this case we compute the approximate Hessian of the $i$-th worker using the L-SR1 update:
    \begin{equation}\label{eq:LSR1}
        B_{k+1}^i = B_k^i + (\bY_k - B_k^iS_k)(M_k^i - S_k^T B_k^i S_k)^\dagger (\bY_k - B_k^iS_k)^T.
    \end{equation}
    To ensure numerically stability of the update \eqref{eq:LSR1}, truncation is employed. At first, we take the spectral decomposition of the small $(m\times m)$ matrix $$M_k^i - S_k^T B_k^i S_k = U_k^iL_k^i(U_k^i)^T,$$ where $L_k^i \in \R^{d\times d}$ is a diagonal matrix containing the eigenvalues, and $U_k^i$ is an orthogonal matrix containing the corresponding eigenvectors.
    Next, we compute the inverse $(L_k^i)^{-1}$, and truncate small values. In particular, define $(l_k^i)_j^{-1} \eqdef (L_k^i)^{-1}_{jj}$, and set all elements such that $|(l_k^i)_j^{-1}| \leq \omega$ to be $0$, where $\omega>0$ is a user defined parameter that must be initialized. The truncated matrix is denoted by $[(L_k^i)^{-1}]_\omega$. Finally, the update \eqref{eq:LSR1} becomes
    \begin{equation}\label{eq:LSR1_truncated}
        B_{k+1} = B_k + (\bY_k - B_k^iS_k)U_k^i [(L_k^i)^{-1}]_\omega  (U_k^i)^T(\bY_k - B_k^iS_k)^T.
    \end{equation}
    \begin{algorithm}[t]
          \caption{Truncated L-SR1 update}\label{alg:hess_lsr1}
            \begin{algorithmic}[1]
            \REQUIRE $\Y_k^i \in \R^{d \times m}, ~M_k^i \in \R^{m \times m}, ~B_k^i \in \R^{d \times d}, ~S_k \in \R^{d \times m}$ for $i = 1, \ldots, n$,
            $~\omega > 0$ -- truncation constant.
            \STATE \textbf{On the server:}
            \FOR{$i = 1, \ldots, n$}
            \STATE compute $(M_k^i - (S_k^i)^T\Y_k^i) =U_k^i L_k^i (U_k^i)^T$;
            \STATE truncate $(L_k^i)^{-1}$  
            to form $[(L_k^i)^{-1}]_\omega$;
            \STATE compute $B_{k+1}^i$ via \eqref{eq:LSR1_truncated}.
            \ENDFOR
        \end{algorithmic}
    \end{algorithm}
\textbf{Hessian update option 2. Direct update (Algorithm \ref{alg:hess_direct}).}
    Setting $B_0=0$ in \eqref{eq:LSR1} gives
    \begin{equation}\label{eq:SONIA_approx}
        \B^i_k = \Y^i_k (M^i_k)^\dagger (\Y^i_k)^T,
    \end{equation}
    which we refer to as the Direct update. Then, we utilize the following learning technique
    \begin{equation}\label{eq:direct_update}
        B_{k+1}^i = (1 - \beta_k)B_k^i +  \beta_k\B^i_k,
    \end{equation}
    where $0<\beta_k\leq 1$ is the learning rate. Thus, for the Direct update, the new Hessian approximation is a convex combination of the previous update and the matrix in \eqref{eq:SONIA_approx}.
    \\[2pt]
    Note, that for both Option 1 and Option 2, the Hessian approximation $B_{k+1}^i$ is symmetric, since $M_k^i = S_k^T\nabla f_i(w_k)S_k$, $B_k^i$, and $M_k^i - S_k^TB_k^iS_k$ are symmetric for $i = 1, \ldots, n$. 
    
    
    \begin{algorithm}[h]
          \caption{Direct update.}\label{alg:hess_direct}
            \begin{algorithmic}[1]
            \REQUIRE $\Y_k^i \in \R^{d \times m}, ~M_k^i \in \R^{m \times m}, ~B_k^i \in \R^{d \times d}$ $\forall i$
            \STATE \textbf{On the server:}
            $0<\beta_k\leq 1$ -- learning rate.
            \FOR{$i = 1, \ldots, n$}
            \STATE compute $\B^i_k = \Y^i_k (M^i_k)^\dagger (\Y^i_k)^T$;
            \STATE select learning rate $\beta_k$ 
            \STATE compute
            $B^i_{k+1} = (1 - \beta_k)B^i_{k} + \beta_k\B^i_k$.
            \ENDFOR
        \end{algorithmic}
     \end{algorithm}

    \subsection{Iterate update}
    Note that, at the start of the iterate update step, all the Hessian approximations $B_{k+1}^i$, and all variables from \eqref{eq:node_results}, are available on the main node. Then, the server forms
    \begin{align*}
          \nabla F(w_k) &= \tfrac{1}{n} \textstyle{\sum} _{i=1}^n f_i(w_k),  &
          \widetilde{Y}_k &\eqdef \tfrac{1}{n}\textstyle{\sum} \Y_k^i = \tfrac{1}{n}\textstyle{\sum} _{i=1}^n (C_k^i + H_k^i S_k), \\
          M_k &\eqdef \tfrac{1}{n} \textstyle{\sum}_{i=1}^n M_k^i = S_k \nabla^2 F(w_k)S_k^T, 
          &
          B_{k+1} &\eqdef \tfrac{1}{n} \textstyle{\sum}_{i=1}^n B_{k+1}^i.
    \end{align*}
    Now the goal is to compute a search direction $p_k$, of the general form:
    \begin{equation}\label{eq:search_dir}
        p_k = - A_k \nabla F(w_k),
    \end{equation}
    for some matrix $A_k$. Two specific options for $A_k$ are described now.
     
    \textbf{Search direction option 1. Truncated inverse Hessian approximation (Algorithm \ref{alg:approx_trunc}).}
    Here, $A_k$ is taken to be the truncated inverse Hessian approximation $\left(|B_{k+1}|_\omega^\Omega\right)^{-1}$. Consider the following. 
    
    \begin{definition}\label{def:truncation}
        Let $B_k, V_k, \Lambda_k$ be matrices such that $B_k = V_k \Lambda_k V_k^T$, and let $0 < \omega \leq \Omega$. The truncated inverse Hessian approximation of $B_{k}$ is $\left(|B_{k}|_\omega^\Omega\right)^{-1} := V_k (|\Lambda_k|_\omega^\Omega)^{-1}V_k^T$,
        where \\ $(|\Lambda_k|_\omega^\Omega)_{ii} = \min\lb \max \lb |\Lambda|_{ii}, \omega\rb, \Omega\rb.$
    \end{definition}
    Definition \ref{def:truncation} was proposed in \cite{paternain2019newton} and was used to provide a convergence guarantee for their Nonconvex Newton method (to a local minimum). Firstly, an eigen-decomposition of $B_k$ is computed, but with every eigenvalue replaced by its absolute value. Secondly, a thresholding step is applied, so that any eigenvalue (in absolute value) that is smaller (resp. greater) than a user defined threshold $\omega$ (resp. $\Omega$) is replaced by $\omega$ (resp. $\Omega$). This truncation is crucial for several reasons. Firstly, it ensures $B_k$ is well-defined, which leads to stable updates. Secondly, it ensures that after truncation, the resulting matrix will be full-rank and positive definite. Therefore, it guarantees that the following search direction is a \emph{decent} direction
    \begin{equation}\label{eq:search_dir_sr1}
        p_k = \left(|B_{k+1}|_\omega^\Omega\right)^{-1} \nabla F(w_k).
    \end{equation}
    \begin{algorithm}[t]
           \caption{Truncated inverse Hessian approximation}\label{alg:approx_trunc}
            \begin{algorithmic}[1]
            \REQUIRE $\nabla F(w_k) \in \R^d, ~ \bY_k \in \R^{d\times m}, ~ M_k \in \R^{m\times m}, ~ B_{k+1} \in \R^{d\times d}$,
            $\Omega > \omega > 0$ -- truncation constants.
            \STATE \textbf{On the server:}
            \STATE compute spectral decomposition $B_{k+1} = V_k \Lambda_k V_k^T$;
            \STATE truncate $\Lambda_k$ 
            to form $|\Lambda_k|_\omega^\Omega$  via Definition \ref{def:truncation};
            \STATE compute search direction $p_k$ via \eqref{eq:search_dir_sr1};
            \RETURN $p_k$.
        \end{algorithmic}
    \end{algorithm}

    \textbf{Search direction option 2. FedSONIA (Algorithm \ref{alg:approx_fedsonia}).}
    FedSONIA is a modification of SONIA \cite{jahani2021sonia} that is adapted to the Federated Learning setting. The key idea of SONIA is to utilize curvature information in one subspace and perform a gradient step in orthogonal complement. The intuition is that, by incorporating some curvature information the resulting search direction should be `better' than a pure gradient direction, but the user has control over the dimension of each subspace, and therefore they have control over the cost of obtaining the curvature information. Truncation (Definition \ref{def:truncation}) and modified SR1 update (eq. \eqref{eq:LSR1}) ensure that the Hessian-approximation is full-rank, positive and well defined, which is crucial to avoid checking condition on the curvature pairs $Y_k^TS_k$ ($\Y_k^TS_k$ for FedSONIA). To construct the approximation of the Hessian we use \eqref{eq:SONIA_approx}. Note, that SONIA does not need the Hessian approximation $B_k$ itself to commit a step. Instead, it utilizes only Hessian-matrix product $Y_k$ and sketching matrix $S_k$.
    \\[2pt]
    Define $B_{k + 1}^{\text{SONIA}} = \Y_k (M_k)^\dagger \Y_k^T$. Following the strategy in \cite{jahani2021sonia}, apply an economy $QR$ factorization of $\Y_k = Q_k R_k$, so that $Q_k \in \R^{d \times m}$ has orthonormal columns and $R_k \in \R^{m\times m}$ is upper triangular. Thus,
    $B_{k + 1}^{\text{SONIA}} = Q_kR_k(M_k)^\dagger R_k^TQ_k^T$. 
    Applying the spectral decomposition, we obtain
    $R_k(M_k)^\dagger R_k^T = V_k \Lambda_k V_k^T$, where the columns of $V_k \in \R^{m\times m}$ form an orthonormal basis (of eigenvectors), and $\Lambda_k \in \R^{m\times m}$ is a diagonal matrix containing the corresponding eigenvalues. Therefore, 
    $$B_{k + 1}^{\text{SONIA}} = Q_k V_k \Lambda_k V_k^T Q_k^T \stackrel{ \widetilde{V}_k := Q_k V_k}{=}\widetilde{V}_k \Lambda_k \widetilde{V}_k^T,$$ 
    where $\widetilde{V}_k \in \R^{d \times m}$ has orthonormal columns since $Q_k$ has orthonormal columns and $V_k$ is an orthogonal matrix. 
    Then, we apply Definition \ref{def:truncation} to form the truncated approximation 
    \begin{equation}\label{eq:A_sonia}
        \left(|B_{k+1}^{\text{SONIA}}|_\omega^\Omega\right)^{-1} = \V_k(|\Lambda_k|_\omega^\Omega)^{-1}\V_k^T.
    \end{equation} 
    Next, we perform subspace decomposition on the current gradient direction:
    \begin{equation}\label{eq:decomposition}
        \nabla F(w_k) = g_k + g_k^\perp,
    \end{equation}
    where $g_k = \widetilde{V}_k\widetilde{V}_k^T\nabla F(w_k)$ and $g_k^\perp = (I - \widetilde{V}_k\widetilde{V}_k^T)\nabla F(w_k)$. Putting this all together, the FedSONIA search direction (similar to \cite{jahani2021sonia}) is:
    \begin{equation}
        \label{eq:search_direction_sonia}
        p_k :=  - \left(|B_{k+1}^{\text{SONIA}}|_\omega^\Omega\right)^{-1} g_k - \rho_k g^\perp_k \stackrel{\eqref{eq:A_sonia}, ~ \V_k^TV_k = 1}{=} -\left(\left(|B_{k+1}^{\text{SONIA}}|_\omega^\Omega\right)^{-1} + \rho_k (I-\widetilde{V}_k\widetilde{V}_k^T)\right)\nabla F(w_k),
    \end{equation}
         where $\rho_k \in [1/\Omega, 1/(\max\limits_i[(|\Lambda_k|_\omega^\Omega)^{-1}]_{ii}) ]$. One has control over the cost of the curvature information in \eqref{eq:search_direction_sonia} via the memory size $m$. Note that this will also affect the communication complexity. 
         \\[2pt]
    \begin{algorithm}[t]
           \caption{FedSONIA}\label{alg:approx_fedsonia}
            \begin{algorithmic}[1]
            \REQUIRE $\nabla F(w_k) \in \R^d, ~ \bY_k \in \R^{d\times m}, ~ M_k \in \R^{m\times m}$,
            $\Omega > \omega > 0$ -- truncation constants.
            \STATE \textbf{On the server:}
            \STATE compute $B_k^{\text{SONIA}} \eqdef \Y_k (M_k)^\dagger \Y_k^T$;
            \STATE compute $QR$ factorization of $\Y_k (= Q_k R_k)$;
            \STATE compute spectral decomposition of $R_k(M_k)^\dagger R_k^T (= V_k \Lambda_k V_k^T)$;
            \STATE construct $\widetilde{V}_k := Q_k V_k$;
            \STATE truncate $\Lambda_k$ 
            to form $|\Lambda_k|_\omega^\Omega$ via Definition \ref{def:truncation};
            \STATE Set $\rho_k$ and decompose gradient via \eqref{eq:decomposition};
            \STATE Compute search direction $p_k$ via \eqref{eq:search_direction_sonia};
            \RETURN $p_k$.
        \end{algorithmic}
    \end{algorithm}
Finally, a new search direction $p_k$ is computed, which depends on the chosen update strategy (Algorithm \ref{alg:approx_trunc} or \ref{alg:approx_fedsonia}) and to compute the new iterate $w_{k+1}$ via \eqref{eq:update_rule}. Recall that for both these strategies the resulting search direction $p_k$ has the form \eqref{eq:search_dir}, 
    where $A_k = (|B_{k+1}|_\omega^\Omega)^{-1}$ for the Truncated inverse Hessian approximation \eqref{eq:search_dir_sr1}, and  $A_k = \left(|B_{k+1}^{\text{SONIA}}|_\omega^\Omega\right)^{-1} + \rho_k (I-\widetilde{V}_k\widetilde{V}_k^T)$ for FedSONIA \eqref{eq:search_direction_sonia}. The process then restarts (from Section~\ref{sec:workerstep}). Our framework is summarized in Algorithm~\ref{alg:main}. 
\\[2pt]    
{\bf Complexity.} The per iteration complexity of FLECS depends on the chosen options for both the Hessian approximation and the iterate updates. The complexity of the Truncated L-SR1 update for one Hessian approximation consists of matrix products $(O(md^2))$ and a singular value decomposition of $m \times m$ matrix $(O(m^3))$. For the Direct update, the matrix products cost $O(md^2)$ and forming the pseudo-inverse of the $m \times m$ matrix costs $O(m^3)$. Therefore, the total complexity of either Hessian approximation update is $O(nmd^2)$. The complexity of computing search direction via Truncated inverse Hessian approximation is $O(d^3)$, since we utilize spectral decomposition and products of $d\times d$ matrices. FedSONIA update comprises 1) QR factorization $(O(dm^2))$; 2) pseudoinverse of $m\times m$ matrix $(O(m^3))$; 3) spectral decomposition of $m \times m$ matrix $(O(m^2))$; which gives total complexity $O(dm^2)$. Finally, the complexity of FLECS on the server only depends on options for the search direction and equals $O(d^3 + nmd^2)$ for Truncated Inverse Hessian approximation and $O(nmd^2)$ for FedSONIA. The the worker step's complexity consists of $m$ Hessian-vector products  and matrix multiplication $(O(md^2))$. 
\begin{algorithm}[t]
           \caption{FLECS}\label{alg:main}
            \begin{algorithmic}[1]
            \REQUIRE $w_0$~-- starting point,
            $m$~-- memory size, $B_0^i$~-- initial Hessian approximations for each worker $i=1\ldots n$ on the server, $0 < \omega < \Omega$ -- truncation constants.
            \FOR{$k=0,1,\ldots$}
            \STATE \textbf{On $i$-th machine:}
            \STATE sample $S_k \in \R^{d\times m}$;
            \STATE collect $H_k^iS_k, ~ w_k$ from the server;            \STATE compute $Y_k^i:= \nabla^2f_i(w_k)S_k, ~ U_k^i:= S_k^T Y_k^i$;
            \STATE send $\nabla f_i(w_k),~ M_k^i,~ C_k^i = \mathcal{C}(Y_k^i - H_k^i S_k)$ to the server.
            \STATE \textbf{On the server:}
            \STATE sample $S_k$;
            \STATE collect $C_k^i,~ M_k^i, ~ i=1\ldots n$ from workers;
            \STATE compute $\bY_k^i = C_k^i + H_k^iS_k$;
            \STATE compute $B_{k+1}^i$ via Algorithm \ref{alg:hess_lsr1} or select learning rate $\beta
            _k$ and compute  $B_{k+1}^i$ via Algorithm \ref{alg:hess_direct};
            \STATE form $\nabla F(w_k), \bY_k, M_k, B_{k+1}$ as average over workers of $\nabla f_i(w_k), \bY_k^i, M^i_k, B^i_{k+1}, ~ i=1, \ldots, n$;
            \STATE compute search direction $p_k$ via Algorithm \ref{alg:approx_trunc} or \ref{alg:approx_fedsonia};
            \STATE select stepsize $\alpha_k$ and set $w_{k+1} = w_k + \alpha_k p_k$;
            \STATE sample $S_{k+1} \in \R^{d\times m}$;
            \STATE send $w_k, B_{k+1}S_{k+1}$ to all workers.
            \ENDFOR
        \end{algorithmic}
    \end{algorithm}

    \section{FLECS: Convergence Theory} \label{sec:theory}
    
    In this section, convergence theory for our FLECS framework is presented. We provide global convergence results in both the strongly convex, and nonconvex cases. A local convergence result is also presented, which shows local linear convergence of the iterates under strong convexity. We define $w_0$ to be the initial point, $w^{\star} = \arg \min \limits_{w \in \R^d} F(w)$ and $F^{\star} = F(w^{\star})$.
    \vspace{-5pt}
    \begin{assumption}\label{assum:diff}
        The function $F(w)$ is twice continuously differentiable for all $w \in \R^d$.
    \end{assumption}
    \vspace{-5pt}
    We begin by listing the preliminary lemma.
    \begin{lemma} \label{lem:positive_df} 
        Let Assumption \ref{assum:diff} hold. Let $A_k$ be defined as in \eqref{eq:search_dir} depending on the choice of iterates update. Then, for any $k \geq 0$ there exist constants $0 < \mu_1 \leq \mu_2$ such that
        $\mu_1 I \preceq A_k \preceq \mu_2 I$.
    \end{lemma}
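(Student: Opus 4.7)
The plan is to handle the two possible forms of $A_k$ separately and in each case read off the spectral bounds directly from the truncation mechanism (Definition \ref{def:truncation}), so no additional machinery is required. The result follows essentially from the algebra of projections.

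For Search direction option 1, $A_k = (|B_{k+1}|_\omega^\Omega)^{-1} = V_k(|\Lambda_k|_\omega^\Omega)^{-1}V_k^T$ with $V_k$ orthogonal. By Definition \ref{def:truncation}, every diagonal entry of $|\Lambda_k|_\omega^\Omega$ lies in $[\omega,\Omega]$, so every diagonal entry of the inverse lies in $[1/\Omega,\,1/\omega]$. Since $V_k$ is orthogonal, the eigenvalues of $A_k$ coincide with these diagonal entries, yielding $\tfrac{1}{\Omega} I \preceq A_k \preceq \tfrac{1}{\omega} I$, so we can take $\mu_1 = 1/\Omega$ and $\mu_2 = 1/\omega$.

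For Search direction option 2 (FedSONIA), $A_k = \left(|B_{k+1}^{\text{SONIA}}|_\omega^\Omega\right)^{-1} + \rho_k (I-\widetilde{V}_k\widetilde{V}_k^T)$. The key structural observation is that $\widetilde{V}_k$ has orthonormal columns (as established right before equation \eqref{eq:A_sonia}), so $P_k := \widetilde{V}_k\widetilde{V}_k^T$ and $P_k^\perp := I - \widetilde{V}_k\widetilde{V}_k^T$ are complementary orthogonal projectors. Using \eqref{eq:A_sonia}, the first summand equals $\widetilde{V}_k(|\Lambda_k|_\omega^\Omega)^{-1}\widetilde{V}_k^T$, which acts as a symmetric operator with eigenvalues in $[1/\Omega,1/\omega]$ on the range of $P_k$ and vanishes on the range of $P_k^\perp$. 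The second summand acts as multiplication by $\rho_k$ on the range of $P_k^\perp$ and vanishes on the range of $P_k$. Hence on the two orthogonal invariant subspaces $A_k$ has spectrum in $[1/\Omega,1/\omega]$ and $\{\rho_k\}$ respectively, so $A_k = \widetilde{V}_k(|\Lambda_k|_\omega^\Omega)^{-1}\widetilde{V}_k^T + \rho_k P_k^\perp$ is symmetric with all eigenvalues in $[\min(1/\Omega,\rho_k),\,\max(1/\omega,\rho_k)]$.

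It remains to translate the interval constraint $\rho_k \in [1/\Omega,\, 1/\max_i [(|\Lambda_k|_\omega^\Omega)^{-1}]_{ii}]$ into plain bounds. Because $[(|\Lambda_k|_\omega^\Omega)]_{ii}\in[\omega,\Omega]$, we have $[(|\Lambda_k|_\omega^\Omega)^{-1}]_{ii}\in[1/\Omega,1/\omega]$, whence $1/\max_i [(|\Lambda_k|_\omega^\Omega)^{-1}]_{ii} = \min_i [(|\Lambda_k|_\omega^\Omega)]_{ii}\in[\omega,\Omega]$. Thus $\rho_k\in[1/\Omega,\Omega]$. Combining, for FedSONIA we may take $\mu_1=1/\Omega$ and $\mu_2=\max(1/\omega,\Omega)$, and both cases yield the claim with a single common choice, e.g.\ $\mu_1=1/\Omega$ and $\mu_2=\max(1/\omega,\Omega)$. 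The only mildly subtle step is recognising that $P_k$ and $P_k^\perp$ commute with $A_k$ and reduce it to a block-diagonal operator on their respective ranges; once this is noted the bound is immediate, so I do not anticipate any genuine obstacle.
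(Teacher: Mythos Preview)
Your proof is correct and follows essentially the same strategy as the paper: split on the two search-direction options and read off spectral bounds from the truncation in Definition~\ref{def:truncation}. For Option~1 your argument is identical to the paper's, with $\mu_1=1/\Omega$, $\mu_2=1/\omega$.

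For FedSONIA the core idea is also the same, but the presentation differs slightly. The paper rewrites
\[
A_k \;=\; \widetilde V_k\bigl((|\Lambda_k|_\omega^\Omega)^{-1}-\rho_k I\bigr)\widetilde V_k^T + \rho_k I
\]
and bounds the two pieces separately, arriving at $\mu_1=1/\Omega$ and $\mu_2=2/\omega$. You instead make the block-diagonal structure on $\mathrm{ran}\,P_k$ and $\mathrm{ran}\,P_k^\perp$ explicit and read off the spectrum as $\{d_i\}_{i\le m}\cup\{\rho_k\}$, which gives $\mu_2=\max(1/\omega,\Omega)$. Both constants are legitimate; yours is a bit more conservative in the regime $\Omega>2/\omega$ (because you only use the coarse bound $\rho_k\le\Omega$), while the paper's $2/\omega$ relies on $\rho_k\le 1/\omega$, which is not obviously implied by the stated interval for $\rho_k$. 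In that sense your projector argument is actually the cleaner and more robust of the two.
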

    \vspace{-5pt}
    We introduce assumptions and theoretical results for the strongly convex case. 
    \begin{assumption} \label{assum:strong_conv} 
        There exist positive constants $\mu$ and $L$ such that $\mu I \preceq \nabla^2F(w) \preceq L I,$ $\forall w \in \mathbb{R}^d$.
    \end{assumption}
    \vspace{-5pt}
    The following theorem establishes global linear convergence of FLECS under strong convexity.
    \begin{theorem} \label{thm:main}
        Let Assumptions \ref{assum:diff} and \ref{assum:strong_conv} hold. Then for iterates $\{w_k\}$ generated by Algorithm~\ref{alg:main} with  $0 < \alpha_k =\alpha  \leq \frac{\mu_1}{\mu_2^2 L}$ we have  
        $F(w_k) - F^{\star}  \leq  ( 1-\alpha \mu \mu_1 )^k  [ F(w_0) - F^{\star}  ]
       ~ $ for all $k \geq 0$.
    \end{theorem}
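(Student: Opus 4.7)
The plan is a standard descent-lemma-plus-Polyak-Łojasiewicz argument, with the bounds on $A_k$ from Lemma~\ref{lem:positive_df} playing the role usually played by a positive definite preconditioner. First I would apply the $L$-smooth descent lemma (which follows from Assumption~\ref{assum:strong_conv}, since $\nabla^2 F \preceq LI$) to the update $w_{k+1} = w_k - \alpha A_k \nabla F(w_k)$, yielding
\begin{equation*}
F(w_{k+1}) \le F(w_k) - \alpha\,\nabla F(w_k)^T A_k \nabla F(w_k) + \tfrac{L\alpha^2}{2}\|A_k \nabla F(w_k)\|^2.
\end{equation*}

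Next I would invoke Lemma~\ref{lem:positive_df} to lower-bound the linear term by $\alpha\mu_1\|\nabla F(w_k)\|^2$ and to upper-bound the quadratic term by $\tfrac{L\alpha^2 \mu_2^2}{2}\|\nabla F(w_k)\|^2$. Plugging in the hypothesis $\alpha \le \mu_1/(\mu_2^2 L)$ kills half of the linear term, leaving
\begin{equation*}
F(w_{k+1}) \le F(w_k) - \tfrac{\alpha \mu_1}{2}\|\nabla F(w_k)\|^2.
\end{equation*}
The choice of $\alpha$ is precisely calibrated so that the ``$\mu_1 - L\alpha\mu_2^2/2$'' coefficient is at least $\mu_1/2$; this is the only place the assumed step-size bound is used.

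Finally, strong convexity (Assumption~\ref{assum:strong_conv}) yields the PL-type inequality $\|\nabla F(w_k)\|^2 \ge 2\mu\,(F(w_k) - F^\star)$. Substituting this in, subtracting $F^\star$ from both sides, and iterating gives the one-step contraction $F(w_{k+1}) - F^\star \le (1 - \alpha\mu\mu_1)(F(w_k) - F^\star)$, and hence the claimed $(1-\alpha\mu\mu_1)^k$ rate.

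There is no real obstacle here: the proof is a textbook preconditioned gradient argument, and all the work has already been packaged into Lemma~\ref{lem:positive_df}. The only subtlety worth checking is that the step-size bound $\alpha \le \mu_1/(\mu_2^2 L)$ indeed guarantees $1 - \alpha\mu\mu_1 \in (0,1)$, which follows since $\mu \le L$ forces $\alpha\mu\mu_1 \le \mu_1^2/\mu_2^2 \le 1$.
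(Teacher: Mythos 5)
Your proposal is correct and follows exactly the same route as the paper's proof: descent lemma from $\nabla^2 F \preceq LI$, the bounds $\mu_1 I \preceq A_k \preceq \mu_2 I$ from Lemma~\ref{lem:positive_df}, the step-size choice to get the $\tfrac{\alpha\mu_1}{2}\|\nabla F(w_k)\|^2$ decrease, the PL inequality $\|\nabla F(w)\|^2 \geq 2\mu(F(w)-F^\star)$, and recursion. Your closing sanity check that $1-\alpha\mu\mu_1 \in (0,1)$ is a nice touch the paper omits.
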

    We are now ready to present a convergence guarantee which shows local linear convergence for FLECS under strong convexity. Note that, while Theorem~\ref{thm:main} shows that FLECS converges linearly under strong convexity, the rate depends on parameters related to the Hessian approximations. The following local rate is \emph{independent} of problem conditioning.
     \begin{theorem}\label{thm:local}
        Let Assumptions \ref{assum:diff} and \ref{assum:strong_conv} hold, and $\|w^0 - w^*\| \leq \frac{\mu^2}{2}$, $\frac{1}{n}\sum \limits_{i=1}^n \|B_{k+1}^i - \nabla^2 f_i(w^*)\|_\text{F} \leq \frac{2\mu^2}{L^2}$. Then for iterates $\{w_k\}$ generated by Algorithm~\ref{alg:main} with Truncated inverse Hessian approximation step (Algorithm \ref{alg:approx_trunc}) with parameters $\alpha_k =\alpha=1, ~0 \leq \omega \leq \mu, ~ \Omega \geq L$ we have $$\|w_k - w^*\|^2 \leq \tfrac{1}{2^k}\|w_0 - w^*\|^2$$ for all $k \geq 0$. 
    \end{theorem}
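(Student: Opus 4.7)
The result is a local linear contraction, so I would aim to prove the per-iteration bound
\[
\|w_{k+1}-w^\star\|^2 \leq \tfrac{1}{2}\,\|w_k-w^\star\|^2
\]
and iterate. Induction on $k$ yields the claim, and since contraction in norm keeps $\|w_k-w^\star\|\leq \|w_0-w^\star\|\leq \mu^2/2$ at every step, the per-$k$ hypothesis on $B_{k+1}$ imposed in the statement may be invoked throughout.

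The per-iteration bound is the standard chord/fixed-Hessian Newton error recursion. Abbreviate $A_k := (|B_{k+1}|_\omega^\Omega)^{-1}$. Substituting \eqref{eq:update_rule}--\eqref{eq:search_dir_sr1} with $\alpha_k=1$ into $w_{k+1}-w^\star$, and using the fundamental theorem of calculus $\nabla F(w_k)=H_k(w_k-w^\star)$ with $H_k := \int_0^1 \nabla^2 F(w^\star+t(w_k-w^\star))\,dt$ (since $\nabla F(w^\star)=0$), one obtains
\[
w_{k+1}-w^\star = A_k\bigl(|B_{k+1}|_\omega^\Omega - H_k\bigr)(w_k-w^\star).
\]
Taking operator norms, the task reduces to showing $\|A_k\|\cdot\|\,|B_{k+1}|_\omega^\Omega-H_k\| \leq 1/\sqrt{2}$. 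I would control the first factor via $\omega\leq\mu$ and Weyl's inequality applied to the averaged Frobenius hypothesis $\|B_{k+1}-\nabla^2 F(w^\star)\|_{\rm F}\leq 2\mu^2/L^2$: the eigenvalues of $B_{k+1}$ stay within $2\mu^2/L^2$ of those of $\nabla^2 F(w^\star)\in[\mu,L]\subseteq[\omega,\Omega]$, the truncation in Definition~\ref{def:truncation} is therefore inactive, and $\|A_k\|$ is essentially $1/\mu$. The middle factor splits as
\[
|B_{k+1}|_\omega^\Omega - H_k = \bigl(B_{k+1}-\nabla^2 F(w^\star)\bigr) + \bigl(\nabla^2 F(w^\star)-H_k\bigr),
\]
where the first summand is bounded by the Frobenius hypothesis (together with $\|\cdot\|_{\rm op}\leq\|\cdot\|_{\rm F}$ and Jensen across workers), and the second by Hessian continuity on the segment $[w^\star,w_k]$ combined with $\|w_k-w^\star\|\leq \mu^2/2$.

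The main obstacle will be proving rigorously that the truncation of Definition~\ref{def:truncation} is \emph{inactive} at $B_{k+1}$, so that the Frobenius-norm approximation error transfers cleanly to the spectral-norm bound needed above. The map $B\mapsto|B|_\omega^\Omega$ first takes absolute values of eigenvalues, and so could \emph{enlarge} (rather than shrink) a perturbation if some eigenvalue of $B_{k+1}$ were to cross zero; the quantitative choice $2\mu^2/L^2$ in the hypothesis is precisely what Weyl's inequality needs to rule this out, and this interplay between the absolute-value step and Weyl is the subtle point I would write out carefully. Once that spectral gap is secured, plugging the two resulting estimates back into the displayed inequality and tuning the arithmetic against $\|w_k-w^\star\|\leq \mu^2/2$ should hit the contraction factor $1/\sqrt{2}$, closing the induction.
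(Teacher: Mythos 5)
Your skeleton is the same as the paper's: with $\alpha=1$ the error satisfies $w_{k+1}-w^\star=A_k\bigl(A_k^{-1}-H_k\bigr)(w_k-w^\star)$ with $A_k^{-1}=|B_{k+1}|_\omega^\Omega$, you split $A_k^{-1}-H_k$ into the Hessian-approximation error at $w^\star$ plus the linearization error $\nabla^2F(w^\star)-H_k$, bound the first by the Frobenius hypothesis and the second quadratically in $\|w_k-w^\star\|$, and close by induction (the paper's proof is the squared-norm version of exactly this, via $\|a+b\|^2\le 2\|a\|^2+2\|b\|^2$). Note that the quadratic bound $\tfrac{L}{2}\|w_k-w^\star\|^2$ on the linearization error really requires Lipschitz continuity of $\nabla^2F$, not just Assumption~\ref{assum:strong_conv}; the paper has the same issue, so I do not count it against you.

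The genuine gap is exactly where you flagged it: the claim that the truncation of Definition~\ref{def:truncation} is \emph{inactive}. Weyl's inequality plus $\|B_{k+1}-\nabla^2F(w^\star)\|_\text{F}\le 2\mu^2/L^2$ only yields $\lambda_{\min}(B_{k+1})\ge\mu-2\mu^2/L^2$. Since the theorem permits any $0\le\omega\le\mu$, one can have $\omega\in(\mu-2\mu^2/L^2,\,\mu]$, in which case the lower clipping fires; and when $2\mu>L^2$ the bound does not even keep the eigenvalues positive, so the absolute-value step may fire as well. Hence $|B_{k+1}|_\omega^\Omega=B_{k+1}$ is not a consequence of the hypotheses, and your clean transfer of the Frobenius error through the truncation breaks down. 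The paper sidesteps inactivity altogether with Lemma~\ref{lem:trunc}: writing $B_{k+1}=V_{k+1}\Lambda_{k+1}V_{k+1}^T$, every diagonal entry of $V_{k+1}^T\nabla^2F(w^\star)V_{k+1}$ lies in $[\mu,L]\subseteq[\omega,\Omega]$, and the scalar map $\lambda\mapsto\min(\max(|\lambda|,\omega),\Omega)$ never increases the distance to any point of $[\omega,\Omega]$ (including when $\lambda<0$), so $\||B_{k+1}|_\omega^\Omega-\nabla^2F(w^\star)\|_\text{F}\le\|B_{k+1}-\nabla^2F(w^\star)\|_\text{F}$ whether or not the truncation is active. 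Replace your inactivity argument with this monotonicity lemma and the rest of your plan matches the paper's proof; your Weyl computation remains useful, but only as a lower bound on the truncated eigenvalues (hence on $\|A_k\|^{-1}$), not as a proof that the truncation does nothing.
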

    \vspace{-5pt}
    We introduce the assumptions for nonconvex case.
    \vspace{-5pt}
    \begin{assumption}\label{assum:boundness}
        The function $F$ is bounded below by a scalar $\hat{F}$.
    \end{assumption}
    \vspace{-5pt}
    \begin{assumption}\label{assum:lipschits_grads}
        The gradients of $F$ are $L$-Lipschitz continuous for all $w \in \R^n$.
    \end{assumption}
    \vspace{-5pt}
    The following result shows that FLECS is convergent when problem \eqref{eq:problem} is nonconvex.
    \begin{theorem}\label{thm:nonconv}
        Let Assumptions\ref{assum:diff}, \ref{assum:boundness}, \ref{assum:lipschits_grads} hold and $w_0$ be the starting point. Then after $T > 0$ iterations of Algorithm~\ref{alg:main} with $0 < \alpha_k = \alpha  \leq \frac{\mu_1}{\mu_2^2 L}$,  we have 
        \begin{equation*}
            \tfrac{1}{T} \textstyle{\sum}_{k=0}^{T-1} \|\nabla F(w_k)\|^2 \leq \tfrac{2[F(w_0) - \hat{F}]}{\alpha \mu_1 T} \stackrel{T\to \infty}{\longrightarrow}0.
        \end{equation*}
    \end{theorem}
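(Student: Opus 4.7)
The plan is to derive a sufficient-decrease inequality using the $L$-smoothness of $F$, then telescope across iterations. This is the classical strategy for showing convergence to stationarity of a descent method whose preconditioner is spectrally bounded.

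First, I would invoke Assumption \ref{assum:lipschits_grads} to apply the descent lemma at consecutive iterates. Since $w_{k+1} = w_k - \alpha A_k \nabla F(w_k)$ by \eqref{eq:update_rule} and \eqref{eq:search_dir}, the descent lemma gives
\begin{equation*}
F(w_{k+1}) \leq F(w_k) - \alpha \nabla F(w_k)^T A_k \nabla F(w_k) + \tfrac{L \alpha^2}{2}\|A_k \nabla F(w_k)\|^2.
\end{equation*}

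Next, I would use Lemma \ref{lem:positive_df}, which ensures $\mu_1 I \preceq A_k \preceq \mu_2 I$ under Assumption \ref{assum:diff}, to bound each term on the right-hand side. The lower bound yields $\nabla F(w_k)^T A_k \nabla F(w_k) \geq \mu_1 \|\nabla F(w_k)\|^2$, while the upper bound yields $\|A_k \nabla F(w_k)\|^2 \leq \mu_2^2 \|\nabla F(w_k)\|^2$. Plugging these in gives
\begin{equation*}
F(w_{k+1}) \leq F(w_k) - \alpha \left( \mu_1 - \tfrac{L \alpha \mu_2^2}{2} \right) \|\nabla F(w_k)\|^2.
\end{equation*}
The step-size restriction $\alpha \leq \frac{\mu_1}{\mu_2^2 L}$ implies $\tfrac{L \alpha \mu_2^2}{2} \leq \tfrac{\mu_1}{2}$, so the coefficient in parentheses is at least $\mu_1/2$, yielding the key per-iteration decrease $F(w_{k+1}) \leq F(w_k) - \tfrac{\alpha \mu_1}{2}\|\nabla F(w_k)\|^2$.

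Finally, I would telescope this inequality from $k=0$ to $k=T-1$ and invoke Assumption \ref{assum:boundness} (so that $F(w_T) \geq \hat{F}$) to obtain $\tfrac{\alpha \mu_1}{2} \sum_{k=0}^{T-1}\|\nabla F(w_k)\|^2 \leq F(w_0) - \hat{F}$, and then divide by $T \alpha \mu_1 / 2$ to conclude. There is no real obstacle here; the only subtlety is to verify that the step-size condition is tight enough to absorb the quadratic term in the descent lemma, which is precisely what the bound $\alpha \leq \mu_1/(\mu_2^2 L)$ is designed to do. The result as stated follows immediately, with the limit $T \to \infty$ arising because the right-hand side is $O(1/T)$.
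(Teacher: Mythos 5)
Your proof is correct and follows essentially the same route as the paper: establish the per-iteration decrease $F(w_{k+1}) \leq F(w_k) - \tfrac{\alpha\mu_1}{2}\|\nabla F(w_k)\|^2$ via the descent lemma and the spectral bounds on $A_k$ from Lemma \ref{lem:positive_df}, then telescope and invoke the lower bound $\hat F$. The only (cosmetic) difference is that you re-derive this inequality directly from Assumption \ref{assum:lipschits_grads}, whereas the paper reuses the inequality \eqref{eq:proof_step_app} obtained in the proof of Theorem \ref{thm:main}; your version is arguably cleaner for the nonconvex setting since it cites the correct assumption.
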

    
\section{Numerical Experiments}\label{sec:experiments}
\begin{figure}
\includegraphics[width=0.24\textwidth]{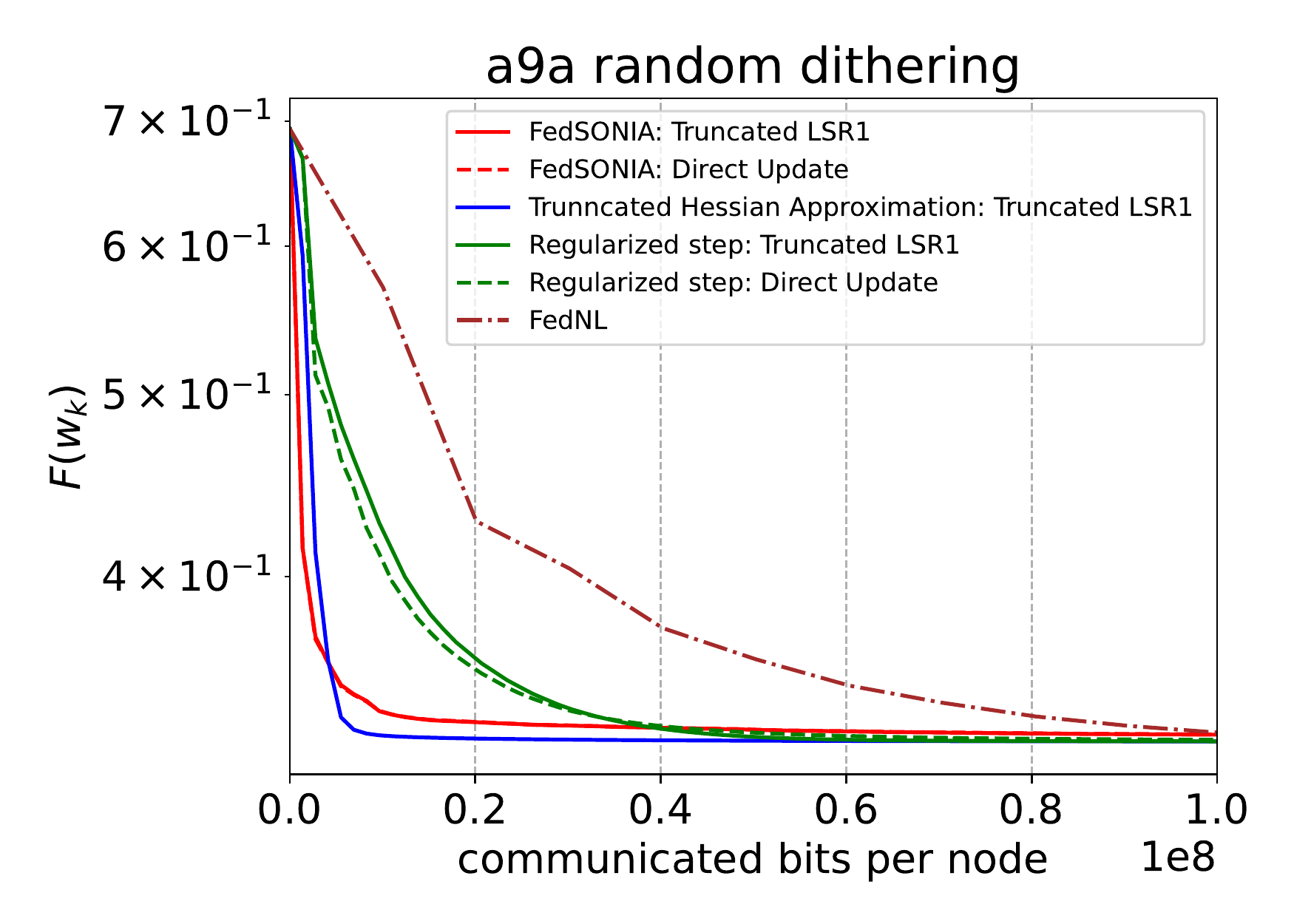}
\includegraphics[width=0.24\textwidth]{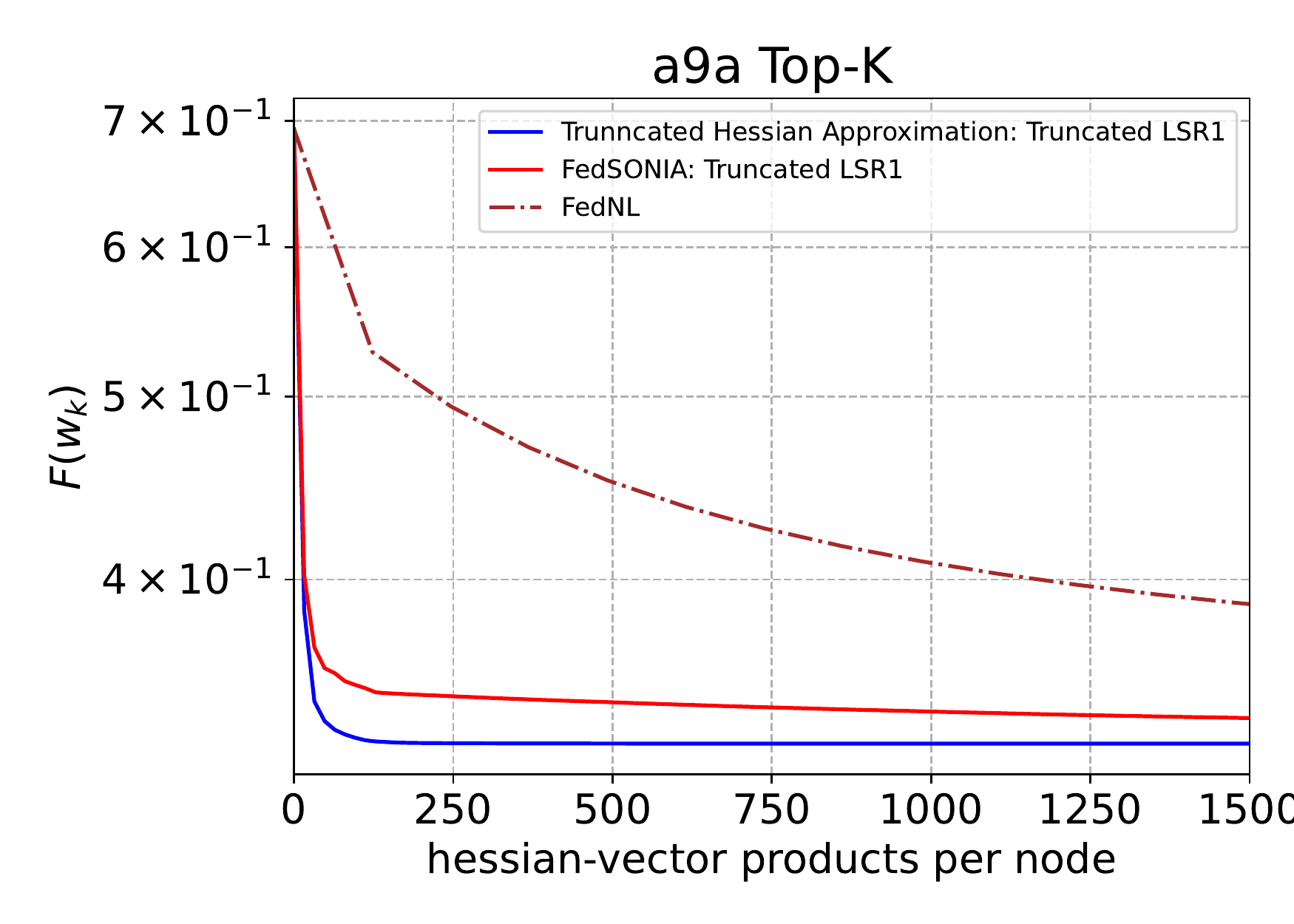}
\includegraphics[width=0.24\textwidth]{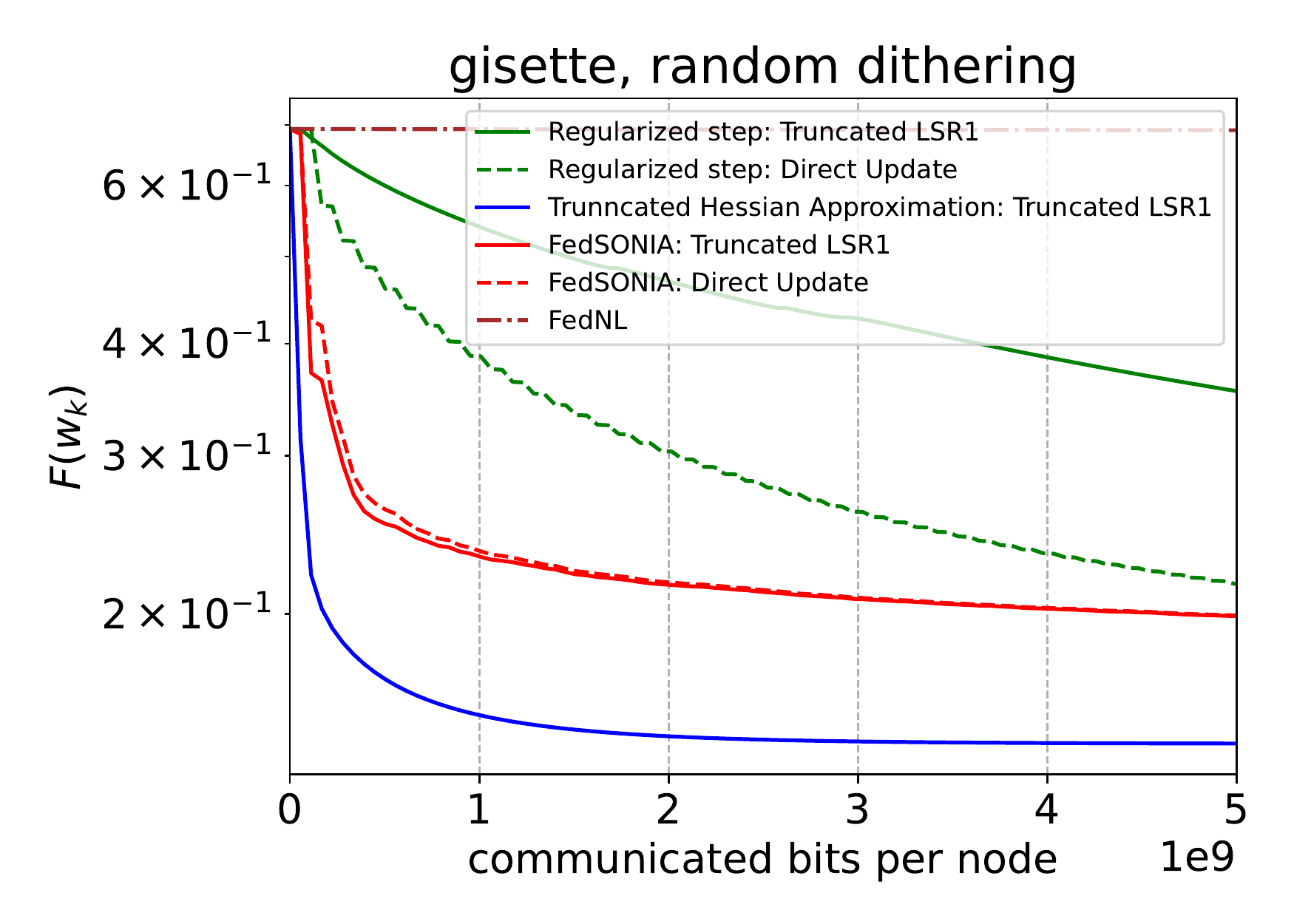}
\includegraphics[width=0.24\textwidth]{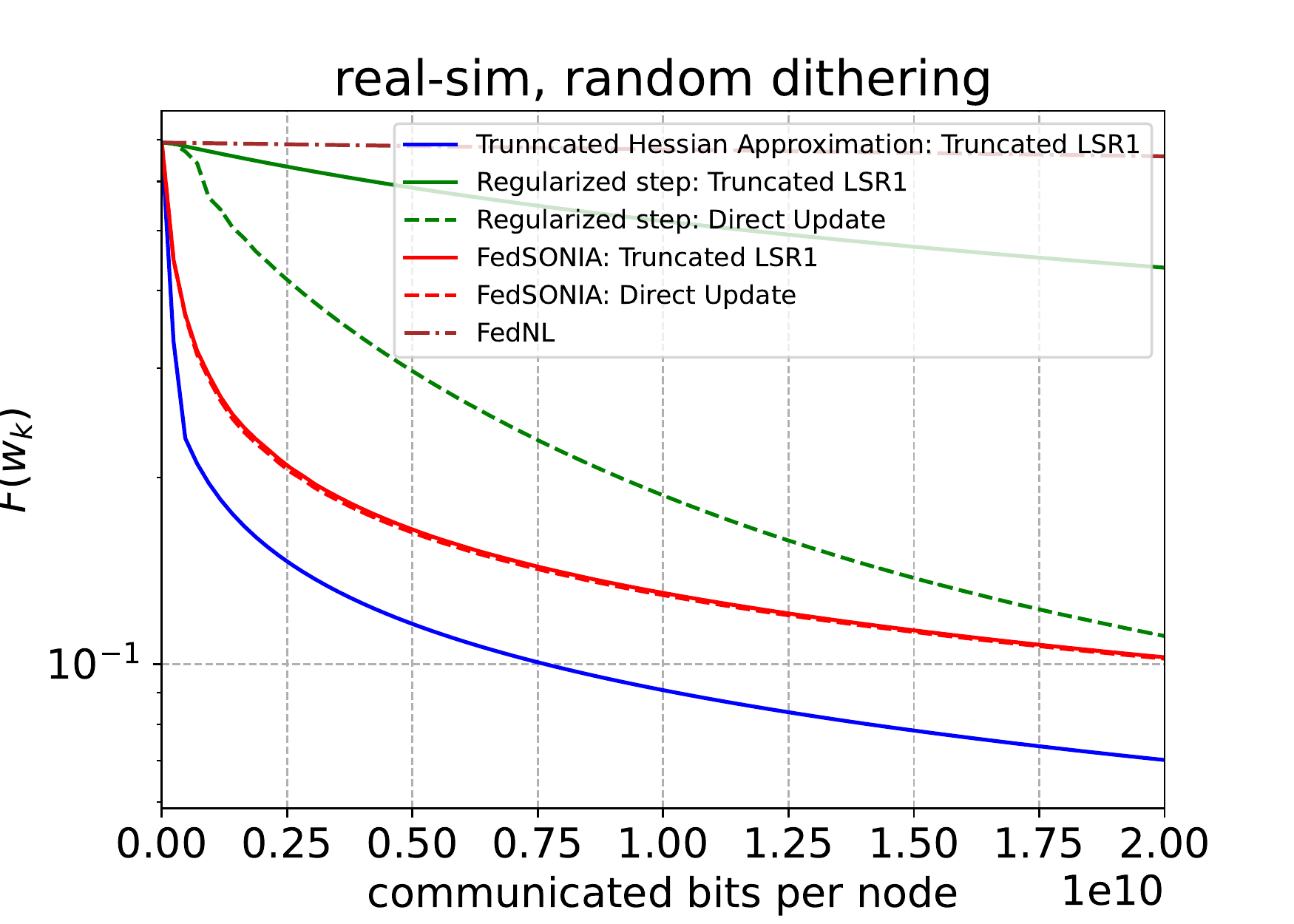}
\\
\includegraphics[width=0.24\textwidth]{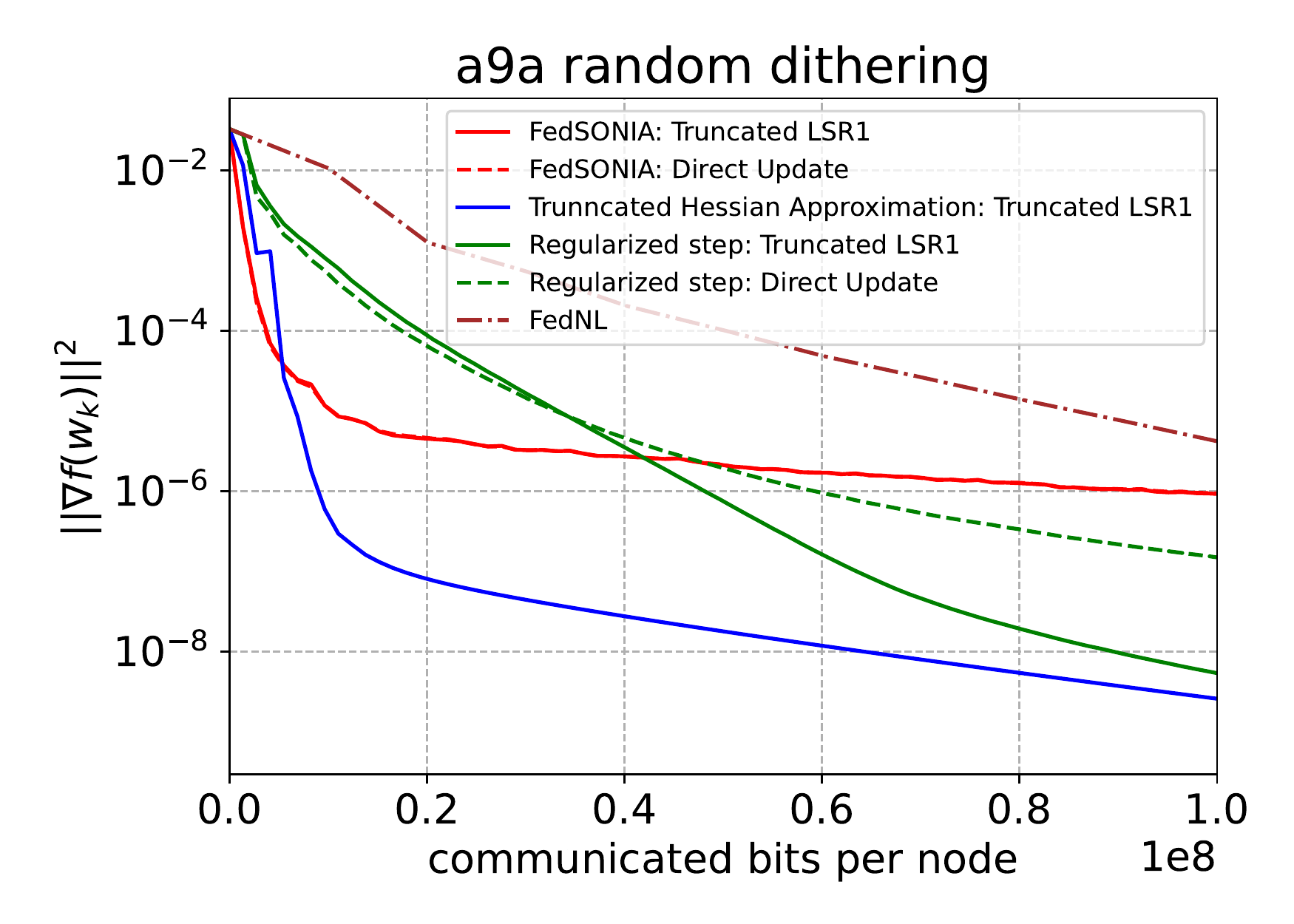}
\includegraphics[width=0.24\textwidth]{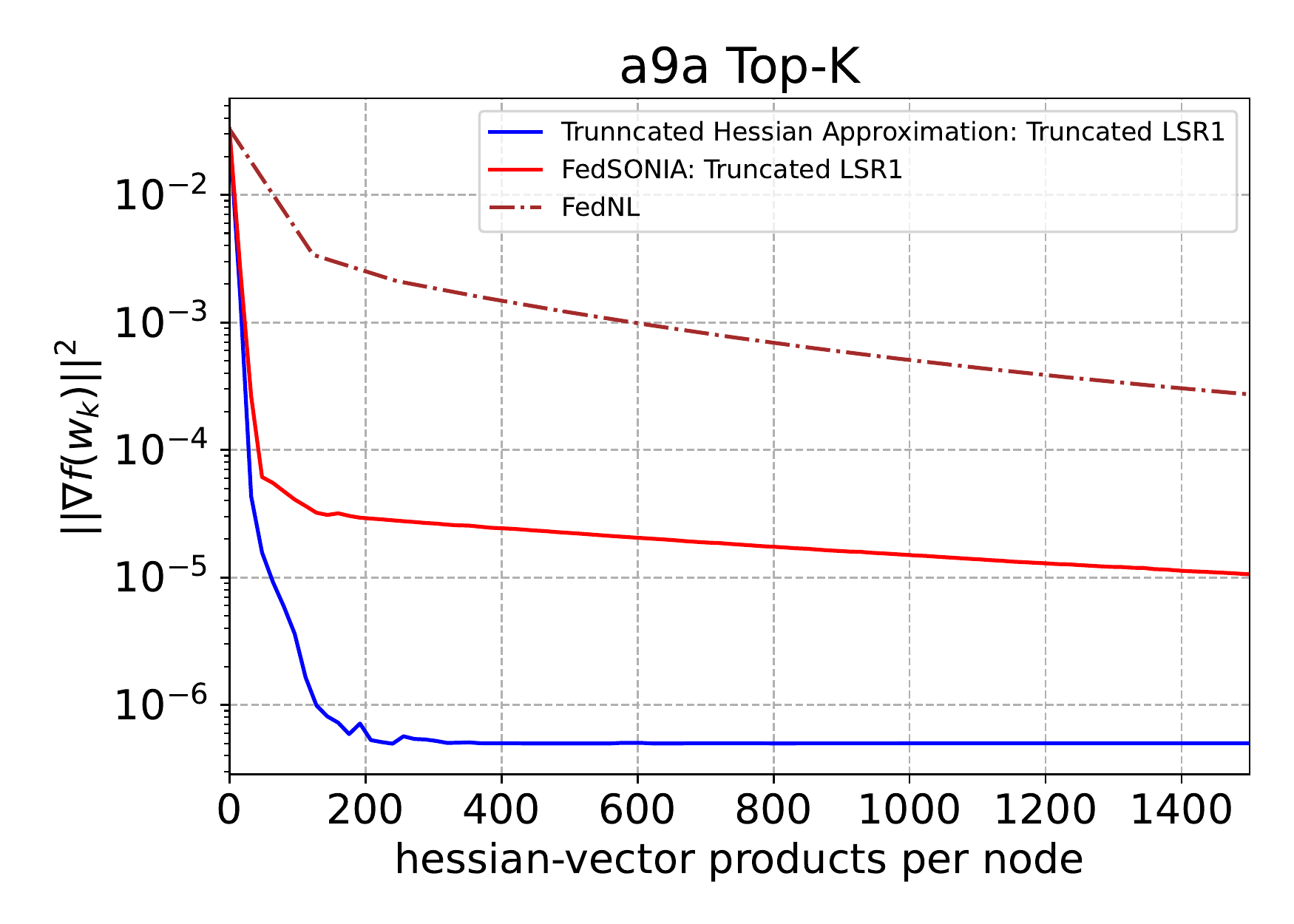}
\includegraphics[width=0.24\textwidth]{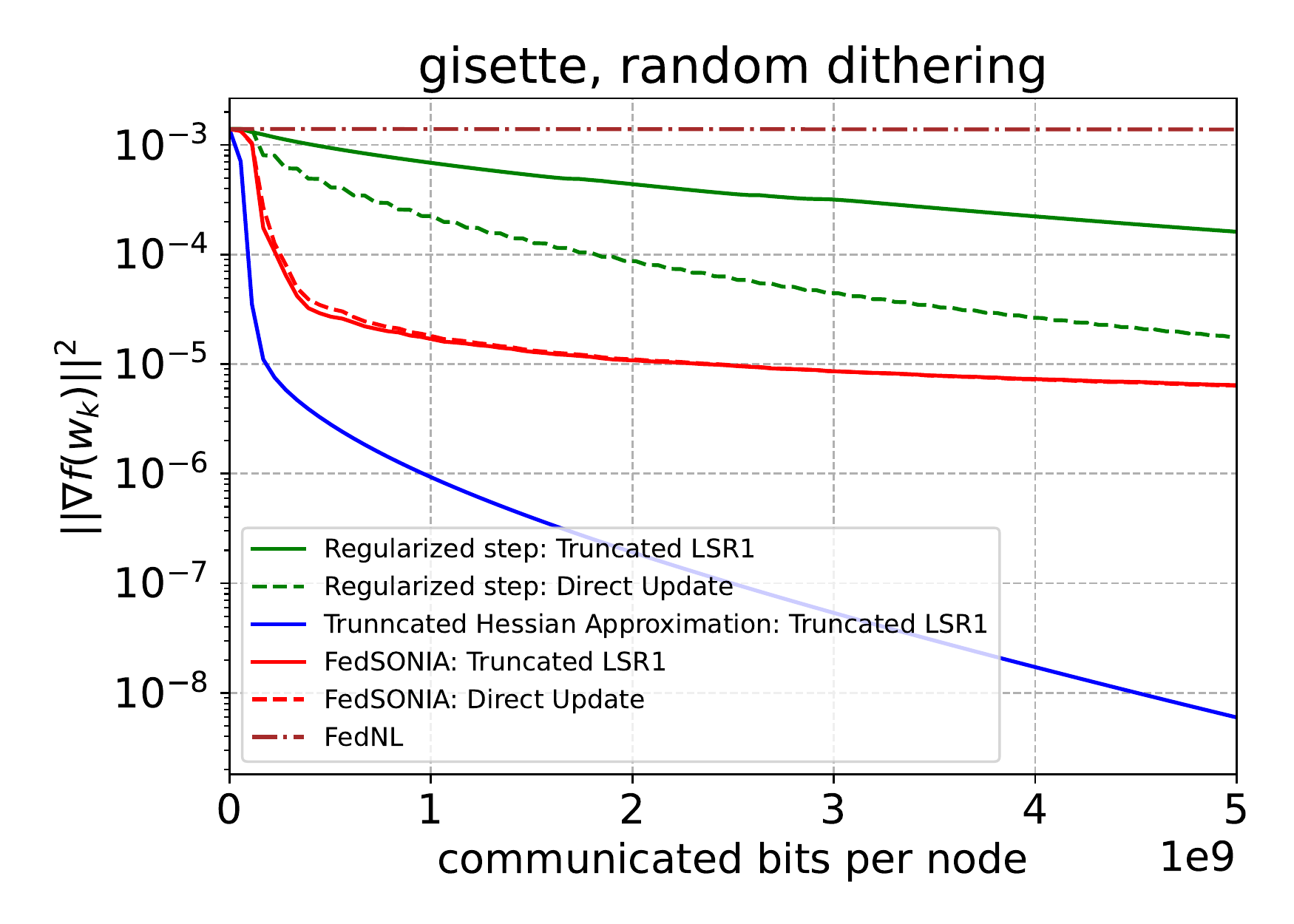}
\includegraphics[width=0.24\textwidth]{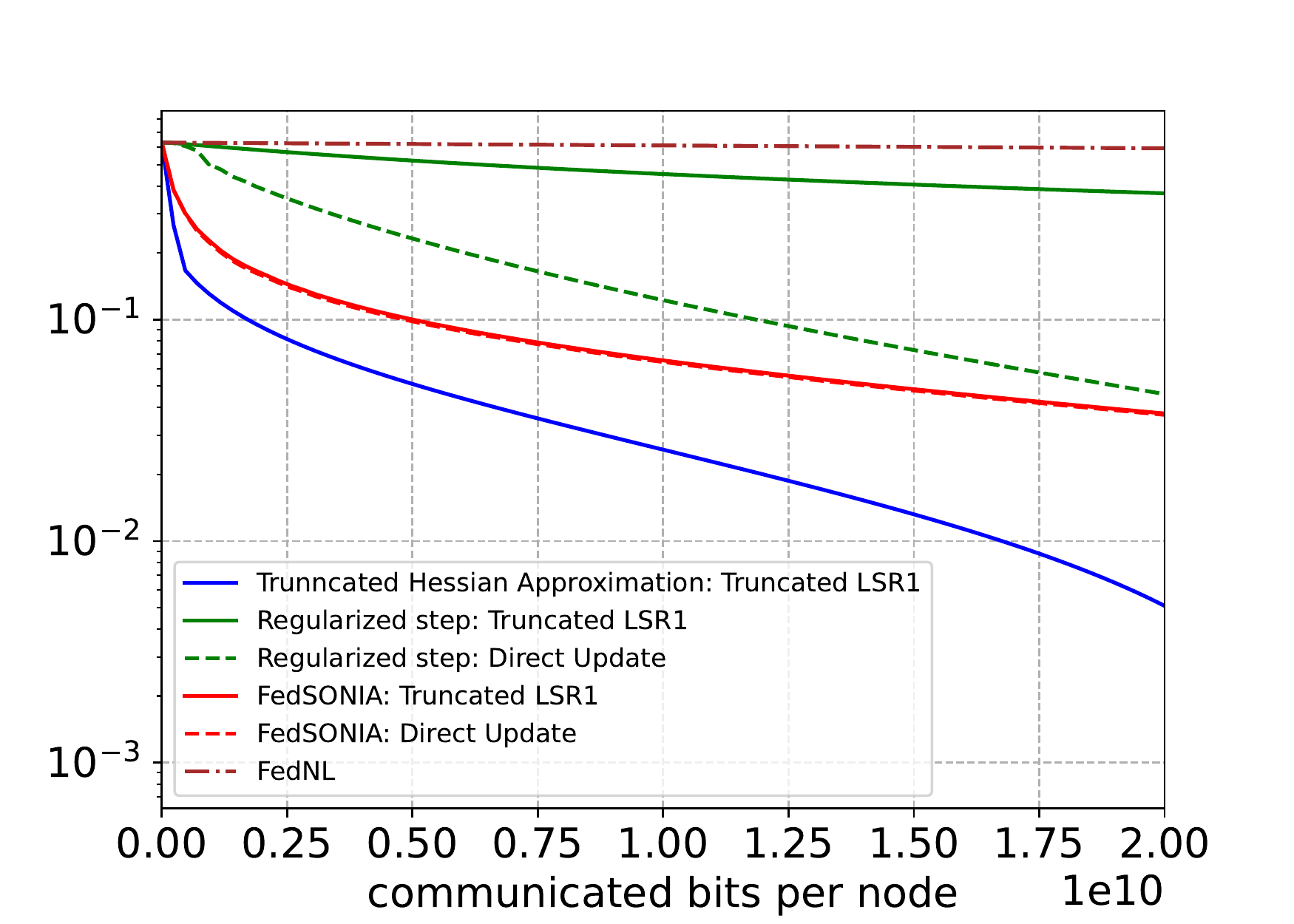}
\caption{Comparison of objective function $F(w_k)$ and the squared norm of gradient $\|\nabla F(w_k)\|^2$.}
\vskip-10pt
\end{figure}
We present numerical experiments for FLECS on a regularized logistic regression problem:
\begin{equation*}
    \min \lb \tfrac{1}{n} \textstyle{\sum}_{i=1}^n \tfrac{1}{r} \textstyle{\sum}_{j=1}^r \log (1+ \exp(-b_{ij}a_{ij}^Tw)) + \mu \|w\|^2\rb,
\end{equation*}
where $\lb a_{ij}, b_{ij}\rb_{j \in [m]}$ are data points on $i$-th device. We use three datasets from the LIBSVM library \cite{chang2011libsvm}: a9a ($123$ features), gisette-scale ($5000$ features) and real-sim ($20958$ features). We use Top-$K$ with $k=4d$ and random dithering \cite{alistarh2017qsgd} applied to each matrix column  with $s=128$ levels and $\infty$-norm as a matrix compressors. The performance of FLECS is compared with FedNL. \\[2pt]
In our experiments we set regularization parameter $\mu=10^{-5}$, the memory size to $m=16$ and use $B_k^i = 0$ as the initial Hessian approximation for FLECS and FedNL for random dithering  and $B_k^i = \nabla f_i(w_0)$ for Top-$K$. Truncation parameters were set to $\omega = 10^{-3}, ~ \Omega=10^8$, and for the FedSONIA update we set $\rho = \frac{1}{\Omega}$ and $\alpha = 1$.
For FLECS we use a Truncated L-SR1 Update (Algorithm \ref{alg:hess_lsr1}) and a Direct update (Algorithm \ref{alg:hess_direct}) with learning rate $\beta = 1$ as the Hessian learning techniques. For FedNL we use the same $\beta$.
\\[2pt]
The following three rules were used to compute the search direction $p_k$: (1) a Truncated Inverse Hessian Approximation (Algorithm \ref{alg:approx_trunc}), (2) FedSONIA (Algorithm \ref{alg:approx_fedsonia}) with $\rho_k = \frac{1}{\Omega}$, and (3) a
 regularized update where $B_{k+1} = B_k + \frac{1}{n}\sum_{i=1}^n\|C_k^i\|I$, where $C_k^i$ are defined in \eqref{eq:node_results}. The last update is motivated by \cite{safaryan2021fednl}. Note, that in the case of FLECS this update does not guarantee positive definiteness of Hessian approximation $B_k$ if $m < d$, but it can still perform well if $m$ is large enough. 
 \\[2pt]
    For FedNL after random dithering the matrix $B_{k+1}$ might not be symmetric, so we use $\frac{1}{2}(B_{k+1} + B_{k+1}^T)$ as Hessian approximation. In this case, we compare algorithms on the number of bits sent to the server per iteration. For Top-K compression, the communication complexity is roughly the same, so we estimate the performance based on the number of Hessian-vector products.
 \\[2pt]
For the iterates update \eqref{eq:update_rule} we set the step-size $\alpha = 1$ for all algorithms and datasets except FedSONIA on gisette and real-sim, where we set $\alpha = 0.1$, as it gives the best performance. Further details specifying algorithm parameters and implementation, as well as additional numerical experiments, can be found in Appendix~\ref{app:numerical}.

\section{Conclusion}\label{sec:conclusion}

This work presents a new second order framework for Federated Learning problems. The framework employs sketching and compression to ensure communication costs are low, and Hessian approximations are stored on the central server so that memory requirements for the workers is low. Due to the inclusion of partial approximate second order information, the algorithm enjoys favourable convergence guarantees, and numerical experiments show the practical benefits of our approach.

\clearpage
\bibliography{bibliography}

\clearpage
\appendix

\section{Proofs}\label{app:proofs}
    \subsection{Assumptions}

    \begin{customassum}{\ref{assum:diff}}
        The function $F$ is twice continuously differentiable.
    \end{customassum}
    \begin{customassum}{\ref{assum:strong_conv}}
        There exist positive constants $\mu$ and $L$ such that $\mu I \preceq \nabla^2F(w) \preceq L I,$ $\forall w \in \mathbb{R}^d.$
    \end{customassum}
    \begin{customassum}{\ref{assum:boundness}}
        The function $F$ is bounded below by a scalar $\hat{F}$.
    \end{customassum}
    \begin{customassum}{\ref{assum:lipschits_grads}}
        The gradients of $F$ are $L$-Lipschitz continuous for all $w \in \R^n$.
    \end{customassum}
    
    \subsection{Proof of Lemma \ref{lem:positive_df}}
    \begin{customlemma}{\ref{lem:positive_df}}
        Let Assumption \ref{assum:diff} hold. Let $A_k$ be defined as in \eqref{eq:search_dir} depending on the choice of iterates update. Then, for any $k \geq 0$ there exist constants $0 < \mu_1 \leq \mu_2$ such that
        $$\mu_1 I \preceq A_k \preceq \mu_2 I.$$
    \end{customlemma}
    \begin{proof}
        \textit{Truncated L-SR1 update.} 
        In this case $A_k = (|B_{k+1}|_\omega^\Omega)^{-1}$. 
        By spectral decomposition $B_{k} = V_{k} \Lambda_{k} V_{k}^T$. By truncation (Definition \ref{def:truncation})
        \begin{equation*}
            \omega I \preceq |\Lambda_k|_{\omega}^\Omega
            \preceq \Omega I.
        \end{equation*}
        Furthermore, $\dfrac 1 \Omega I \preceq A_k  \preceq \dfrac{1}{\omega}I $. Defining $\mu_1 \eqdef \frac{1}{\Omega}$ and $\mu_2 \eqdef \frac{1}{\omega}$, and noting that $0< \mu_1 \leq \mu_2$, gives the result.
        
        \textit{FedSONIA update.}
        In this case $A_k = \left(|B_{k+1}^{\text{SONIA}}|_\omega^\Omega\right)^{-1} + \rho_k (I-\widetilde{V}_k\widetilde{V}_k^T)$. By \eqref{eq:A_sonia} 
        $$A_k = \V_k(|\Lambda_k|_\omega^\Omega)^{-1}\V_k^T = \V_k (|\Lambda_k|_\omega^\Omega)^{-1} - \rho_k I)\V_k^T + \rho_kI \stackrel{\eqref{eq:search_direction_sonia}}{\succeq} \rho_k I.$$
        By truncation (Definition \ref{def:truncation})
        $\omega I \preceq |\Lambda_k|_{\omega}^\Omega\preceq \Omega I.$
        Therefore, $A_k \preceq \frac{1}{\omega} I + \rho_k I \preceq \frac{2}{\omega}$. Defining $\mu_1 \eqdef \frac{1}{\Omega}$ and $\mu_2 \eqdef \frac{2}{\omega}$, and noting that $0< \mu_1 \leq \mu_2$, gives the result.
        
    \end{proof}
    
    \subsection{Proof of Theorem \ref{thm:main}}
    
    \begin{customthm}{\ref{thm:main}}
        Let Assumptions \ref{assum:diff}, and \ref{assum:strong_conv} hold. Then for iterates $\{w_k\}$ generated by Algorithm~\ref{alg:main} with  $0 < \alpha_k =\alpha  \leq \frac{\mu_1}{\mu_2^2 L}$ we have 
        $$F(w_k) - F^{\star}  \leq  ( 1-\alpha \mu \mu_1 )^k  [ F(w_0) - F^{\star}  ] $$ 
        for all $k \geq 0$.
    \end{customthm}
    \begin{proof} We have 
    \begin{gather} 
        F(w_{k+1}) = F(w_k -\alpha A_k \nabla F(w_k)) \nonumber \\
        \stackrel{\text{Assump. \ref{assum:strong_conv}}}{\leq} F(w_k) + \nabla F(w_k)^T (-\alpha A_k \nabla F(w_k)) + \frac{L}{2}\| \alpha A_k \nabla F(w_k)\|^2 \nonumber \\
        \stackrel{Lem. \ref{lem:positive_df}}{\leq} F(w_k) - \alpha \mu_1 \| \nabla F(w_k) \|^2  + \frac{\alpha^2 \mu_2^2 L}{2} \| \nabla F(w_k)\|^2 \nonumber \\
         = F(w_k) - \alpha \left(\mu_1 - \alpha \frac{\mu_2^2 L}{2} \right)\| \nabla F(w_k) \|^2 
        {\leq} F(w_k) - \alpha \frac{\mu_1}{2} \| \nabla F(w_k) \|^2,  \label{eq:proof_step_app}
    \end{gather}
    where the last inequality is due to the choice of the step-size. By Assumtion \ref{assum:strong_conv}, we have $2\mu(F(w)-F^\star) \leq \| \nabla F(w)\|^2$. Therefore,
    \begin{align*}
        F(w_{k+1}) \leq F(w_k) - \alpha \mu \mu_1 (F(w_k)-F^\star).
    \end{align*}
    Subtracting $F^\star$ from both sides, 
    \begin{align*}
        F(w_{k+1}) - F^\star \leq (1 - \alpha \mu \mu_1) (F(w_k)-F^\star).
    \end{align*}
    Recursive application of the above inequality yields the desired result.
    \end{proof}
     \subsection{Proof of Theorem \ref{thm:local}}\label{app:local}
    We begin with the technical lemma for FLECS with Truncated Hessian inverse Approximation step (Algorithm \ref{alg:approx_trunc}).
    \begin{lemma}\label{lem:trunc}
        Let Assumptions \ref{assum:diff}, \ref{assum:strong_conv} hold and $0 \leq \omega \leq \mu$, $\Omega \geq L$. Then for Hessian approximations $B_{k+1}$ we have 
        \begin{equation*}
            \| |B_{k+1}|_\omega^\Omega - \nabla^2 f(w^*)\|_\text{F} \leq\| B_{k+1} - \nabla^2 f(w^*)\|_\text{F}
        \end{equation*}
        for all $k \geq 0$.
    \end{lemma}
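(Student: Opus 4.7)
The plan is to reduce the matrix inequality to a coordinatewise scalar inequality on the eigenvalues of $B_{k+1}$, exploiting the fact that, by Assumption~\ref{assum:strong_conv} together with the hypotheses $0\le\omega\le\mu$ and $\Omega\ge L$, the eigenvalues of $H:=\nabla^2 F(w^*)$ all lie inside $[\omega,\Omega]$, which is precisely the image of the scalar truncation map $\tau(t):=\min(\max(|t|,\omega),\Omega)$.

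First I would fix spectral decompositions $B_{k+1}=U\Lambda U^T$ with eigenvalues $\lambda_1,\dots,\lambda_d$ and $H=WDW^T$ with eigenvalues $d_1,\dots,d_d\in[\mu,L]\subseteq[\omega,\Omega]$. By Definition~\ref{def:truncation}, $|B_{k+1}|_\omega^\Omega=U\,\mathrm{diag}(\tau_i)\,U^T$ with $\tau_i:=\tau(\lambda_i)\in[\omega,\Omega]$. Setting $O:=U^TW$, an orthogonal matrix with $\sum_j O_{ij}^2=1$, and expanding the Frobenius norms via traces gives
\begin{equation*}
\|B_{k+1}-H\|_\mathrm{F}^2-\||B_{k+1}|_\omega^\Omega-H\|_\mathrm{F}^2=\sum_{i=1}^d(\lambda_i-\tau_i)(\lambda_i+\tau_i-2r_i),
\end{equation*}
where $r_i:=\sum_j d_j O_{ij}^2$ is a convex combination of $d_1,\dots,d_d$ and hence satisfies $r_i\in[\mu,L]\subseteq[\omega,\Omega]$. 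This identity is the crux of the argument and follows from a routine expansion of $\mathrm{tr}(B_{k+1}H)$ and $\mathrm{tr}(|B_{k+1}|_\omega^\Omega H)$ in the eigenbasis $U$.

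It then remains to show the termwise inequality $(\lambda_i-\tau_i)(\lambda_i+\tau_i-2r_i)\ge 0$, which I would verify by a six-case analysis on the sign and magnitude of $\lambda_i$. When $\lambda_i\in[\omega,\Omega]$ the factor $\lambda_i-\tau_i$ vanishes. When $\lambda_i>\Omega$ both factors are nonnegative (using $r_i\le\Omega$ and $\tau_i=\Omega$). When $0\le\lambda_i<\omega$ or $-\omega\le\lambda_i<0$, $\tau_i=\omega$ and both factors are nonpositive (using $r_i\ge\omega$). The two genuinely interesting cases are $\lambda_i\in(-\Omega,-\omega)$, where $\tau_i=-\lambda_i$ so that $\lambda_i+\tau_i-2r_i=-2r_i<0$ while $\lambda_i-\tau_i=2\lambda_i<0$, and $\lambda_i\le-\Omega$, where $\tau_i=\Omega$ and both factors are strictly negative since $\lambda_i+\Omega\le 0<2r_i$.

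The main obstacle will be the negative-eigenvalue cases, since $B_{k+1}$ is not assumed positive definite (Truncated L-SR1 produces indefinite updates), and here the absolute value inside $\tau$ flips the sign of $\lambda_i$; one has to check that despite this sign flip the two factors remain of the same sign, which in each case uses only the crucial inclusion $r_i\in[\omega,\Omega]$ together with positivity of $r_i$. Once every summand is shown to be nonnegative, the claimed Frobenius-norm inequality follows immediately.
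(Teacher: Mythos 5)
Your proposal is correct and follows essentially the same route as the paper: both reduce the Frobenius-norm comparison to the scalar inequality $|\tau(\lambda_i)-h_i|\le|\lambda_i-h_i|$ for the diagonal entries $h_i=r_i$ of $\nabla^2 F(w^*)$ in the eigenbasis of $B_{k+1}$, which lie in $[\omega,\Omega]$ by Assumption~\ref{assum:strong_conv}. Your factored form $(\lambda_i-\tau_i)(\lambda_i+\tau_i-2r_i)=(\lambda_i-r_i)^2-(\tau_i-r_i)^2$ is exactly that claim, and your case analysis supplies the verification the paper only asserts.
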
    
\begin{proof}
    By Definition \ref{def:truncation}
    $$|B_{k+1}|_\omega^\Omega\ := V_{k+1} |\Lambda_k|_\omega^\Omega V_{k+1}^T,$$
    where $B_{k+1} = V_{k+1}\Lambda_k V_{k+1}^T$ is spectral decomposition of $B_{k+1}$. Then,
    \begin{gather*}
        \left\| |B_{k+1}|_\omega^\Omega - \nabla^2 f(w^*)\right\|_\text{F} = \left\| V_{k+1} |\Lambda_{k+1}|_\omega^\Omega V_{k+1}^T - \nabla^2 f(w^*)\right\|_\text{F} \\
        \stackrel{V_{k+1}^T V_{k+1} = I}{=} \left\| V_{k+1} |\Lambda_{k+1}|_\omega^\Omega V_{k+1}^T  - V_{k+1} V_{k+1}^T \nabla^2 f(w^*) V_{k+1}V_{k+1}^T\right\|_\text{F} \\
        = \left\|  |\Lambda_{k+1}|_\omega^\Omega   -  V_{k+1}^T \nabla^2 f(w^*) V_{k+1}\right\|_\text{F}.
    \end{gather*}
    Analogically,
    $$\left\|B_{k+1} - \nabla^2 f(w^*)\right\|_\text{F} = \left\|  \Lambda_k  -  V_{k+1}^T \nabla^2 f(w^*) V_{k+1}\right\|_\text{F}.$$
    Now, let us denote $\lambda_i, ~i=1, \ldots, d$ as eigenvalues of $B_{k+1}$ (diagonal elements of $\Lambda_{k+1}$). Let  $\wlambda_1, \wlambda_2, \ldots, \wlambda_n$ be corresponding diagonal elements of $|\Lambda_{k+1}|_\omega^\Omega$. 
    
    Noting, that $$0 \preceq \omega I \preceq \mu I \preceq \nabla^2 F(w^*) \preceq LI \preceq \Omega I,$$ we have $|\wlambda_i - h_i| \leq |\lambda_i - h_i|$ for all $i = 1, \ldots, n$, where $h_i = \left[V_{k+1}^T \nabla^2 F(w^*) V_{k+1} \right]_{ii}$. 
    
    Therefore, 
    \begin{equation*}
            \| |B_{k+1}|_\omega^\Omega - \nabla^2 f(w^*)\|_\text{F} \leq\| B_{k+1} - \nabla^2 f(w^*)\|_\text{F}
    \end{equation*}
    by definition of Frobenius norm and the fact that matrices $|B_{k+1}|_\omega^\Omega - \nabla^2 f(w^*)$ and \\$\Lambda_k  -  V_{k+1}^T \nabla^2 f(w^*) V_{k+1}$ differ only in diagonal elements. 
\end{proof}

    \begin{customthm}{\ref{thm:local}}
        Let Assumptions \ref{assum:diff} and \ref{assum:strong_conv} hold, and $\|w^0 - w^*\| \leq \frac{\mu^2}{2}$, $\frac{1}{n}\sum \limits_{i=1}^n \|B_{k+1}^i - \nabla^2 f_i(w^*)\|_\text{F} \leq \frac{2\mu^2}{L^2}$. Then for iterates $\{w_k\}$ generated by Algorithm~\ref{alg:main} with Truncated inverse Hessian approximation step (Algorithm \ref{alg:approx_trunc}) with parameters $\alpha_k =\alpha=1, ~0 \leq \omega \leq \mu, ~ \Omega \geq L$ we have $$\|w_k - w^*\|^2 \leq \tfrac{1}{2^k}\|w_0 - w^*\|^2$$ for all $k \geq 0$. 
    \end{customthm}
    \begin{proof}
        \begin{gather*}
            \|w_{k+1} - w^*\|^2 = \|w_k - w^* - A_k \nabla F(w_k)\|^2 
            \leq \|A_k\|^2\|A_k^{-1}(w_k - w^*) - \nabla F(w_k)\|^2 \\
            \stackrel{\text{Lem. \ref{lem:positive_df}}}{\leq} \mu_2^2 \|A_k^{-1}(w_k - w^*) - \nabla F(w_k)\|^2 \\
            \leq 2\mu_2^2 \left(\| \left(A_k^{-1} - \nabla^2 F(w^*)\right)(w_k - w^*)\|^2  +  \| \nabla^2 F(w^*)(w_k - w^*) - \nabla F(w_k) + \nabla F(w^*)\|^2\right) \\
            \stackrel{\text{Assump. \ref{assum:strong_conv}}}{\leq} 2\mu_2^2 \left(\| A_k^{-1} - \nabla^2 F(w^*)\|^2 \|w_k - w^*\|^2 +  \frac{L^2}{4}\|w_k - w^*\|^4\right)\\
            \leq 2\mu_2^2\|w_k - w^*\|^2 \left(\| A_k^{-1} - \nabla^2 F(w^*)\|_\text{F}^2  +  \frac{L^2}{4}\|w_k - w^*\|_\text{F}^2\right)\\
            \stackrel{\eqref{eq:search_dir_sr1}}{=} \leq 2\mu_2^2\|w_k - w^*\|^2 \left(\| |B_{k+1}|_\omega^\Omega - \nabla^2 F(w^*)\|_\text{F}^2  +  \frac{L^2}{4}\|w_k - w^*\|_\text{F}^2\right)\\
            \stackrel{\text{Lem.\ref{lem:trunc}}}{\leq }2\mu_2^2\|w_k - w^*\|^2 \left(\| B_{k+1} - \nabla^2 F(w^*)\|_{F}^2  +  \frac{L^2}{4}\|w_k - w^*\|^2\right)
        \end{gather*}
         
        Therefore, by assumptions of the theorem
        \begin{gather*}
            \|w_{k+1} - w^*\|^2 \leq \frac{1}{\mu^2}\|w_k - w^*\|^2 \left(\|B_{k+1} - \nabla^2 F(w^*)\|_\text{F}^2  +  \frac{L^2}{4}\|w_k - w^*\|^2\right) \leq \frac{1}{2}\|w_k - w^*\|^2
        \end{gather*}
    \end{proof}
    \subsection{Proof of Theorem \ref{thm:nonconv}}\label{app:nonconv}

    \begin{customthm}{\ref{thm:nonconv}}
        Let Assumptions\ref{assum:diff}, \ref{assum:boundness}, \ref{assum:lipschits_grads} hold and $w_0$ be the starting point. Then after $T > 0$ iterations of Algorithm~\ref{alg:main} with $0 < \alpha_k = \alpha  \leq \frac{\mu_1}{\mu_2^2 L}$,  we have 
        \begin{equation*}
            \tfrac{1}{T} \textstyle{\sum}_{k=0}^{T-1} \|\nabla F(w_k)\|^2 \leq \tfrac{2[F(w_0) - \hat{F}]}{\alpha \mu_1 T} \stackrel{T\to \infty}{\longrightarrow}0.
        \end{equation*}
     \end{customthm}
    \begin{proof} By \eqref{eq:proof_step_app} 
    \begin{align*} 
        F(w_{k+1})  \leq F(w_k) - \alpha \frac{\mu_1}{2} \| \nabla F(w_k) \|^2.  
    \end{align*}
    Summing both sides of the above inequality from $k=0$  to $T -1$, we obtain
    \begin{align*} 
    	\widehat{F} -F(w_0) \stackrel{\text{Assum. \ref{assum:boundness}}}{\leq} F(w_{T})-F(w_0) = \sum_{k=0}^{T-1}(F(w_{k+1}) - F(w_k)) \leq  - \sum_{k=0}^{T-1}\alpha \frac{\mu_1}{2} \| \nabla F(w_k) \|^2. 
    \end{align*}
    Therefore, we have
    \begin{align} \label{eq:int_nonconvex}
        \sum_{k=0}^{T-1} \| \nabla F(w_k) \|^2  &  \leq \frac{2[ F(w_0) - \widehat{F}]}{\alpha \mu_1}.
    \end{align}
    Dividing \eqref{eq:int_nonconvex} by $T$ we conclude
    \begin{align*}
        \frac{1}{T}\sum_{k=0}^{T-1} \| \nabla F(w_k) \|^2  &  \leq \frac{2[ F(w_0) - \widehat{F}]}{\alpha \mu_1 T}.
    \end{align*}
    \end{proof}
   
\section{Additional Numerical Experiments}\label{app:numerical}

\subsection{Top-$K$ Compressor}\label{app:topk}

In this experiment we used Top-$K$ compression with $k=4d$ on a9a dataset with regularization parameter $\mu = 10^{-5}$. We compare FedNL \cite{safaryan2021fednl} and FLECS in the number of hessian-vector products, since compression cost is roughly the same. For FLECS we set memory size $m=16$ and step-size $\alpha=1$. For both algorithms we set $B_0^i = \nabla F(w_0)$ and Hessian approximation rate $\beta = 1$. In this experiment we omit initialization cost (computation of the full Hessian at starting point) for both algorithms.

\begin{figure}[h]
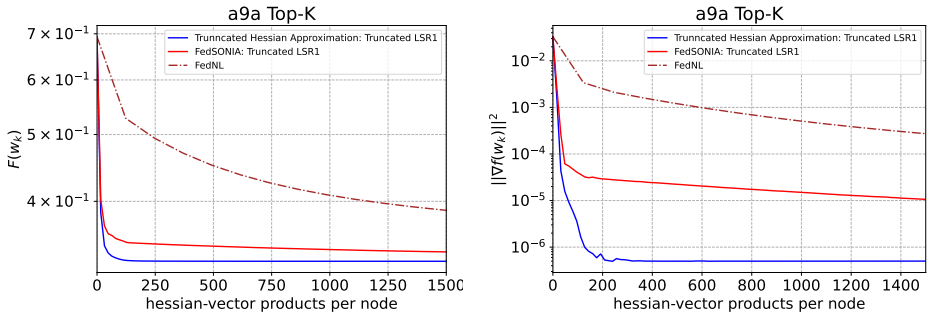

    \includegraphics[width=0.45\textwidth]{experiments/TopK/a9a_top_k_loss-3.pdf}
    \includegraphics[width=0.45\textwidth]{experiments/TopK/a9a_top_k_grad2-4.pdf}
    \vskip-10pt
    \caption{Comparison of objective function $F(w_k)$ and the squared norm of gradient $\|\nabla F(w_k)\|^2$ between FLECS and FedNL.}
\end{figure}

\subsection{Comparison with DIANA and ADIANA.}
In order to compare our framework with DIANA and ADIANA we use random dithering with number of levels $s=\sqrt{d}$ and $p=\infty$. For FLECS we set hyperparametes as: $m=1$, $\beta=1$, $\alpha=1$ and initialize Hessian approximations with $0$.

For DIANA and ADIANA we used theoretical parameters $L=1/4$ and strong convexity constant $\mu = 10^{-5}$. For these methods we used an original code provided by authors \cite{Mishchenko2019, Li2020accFL}.

\begin{figure}[h]
    \includegraphics[width=0.31\textwidth]{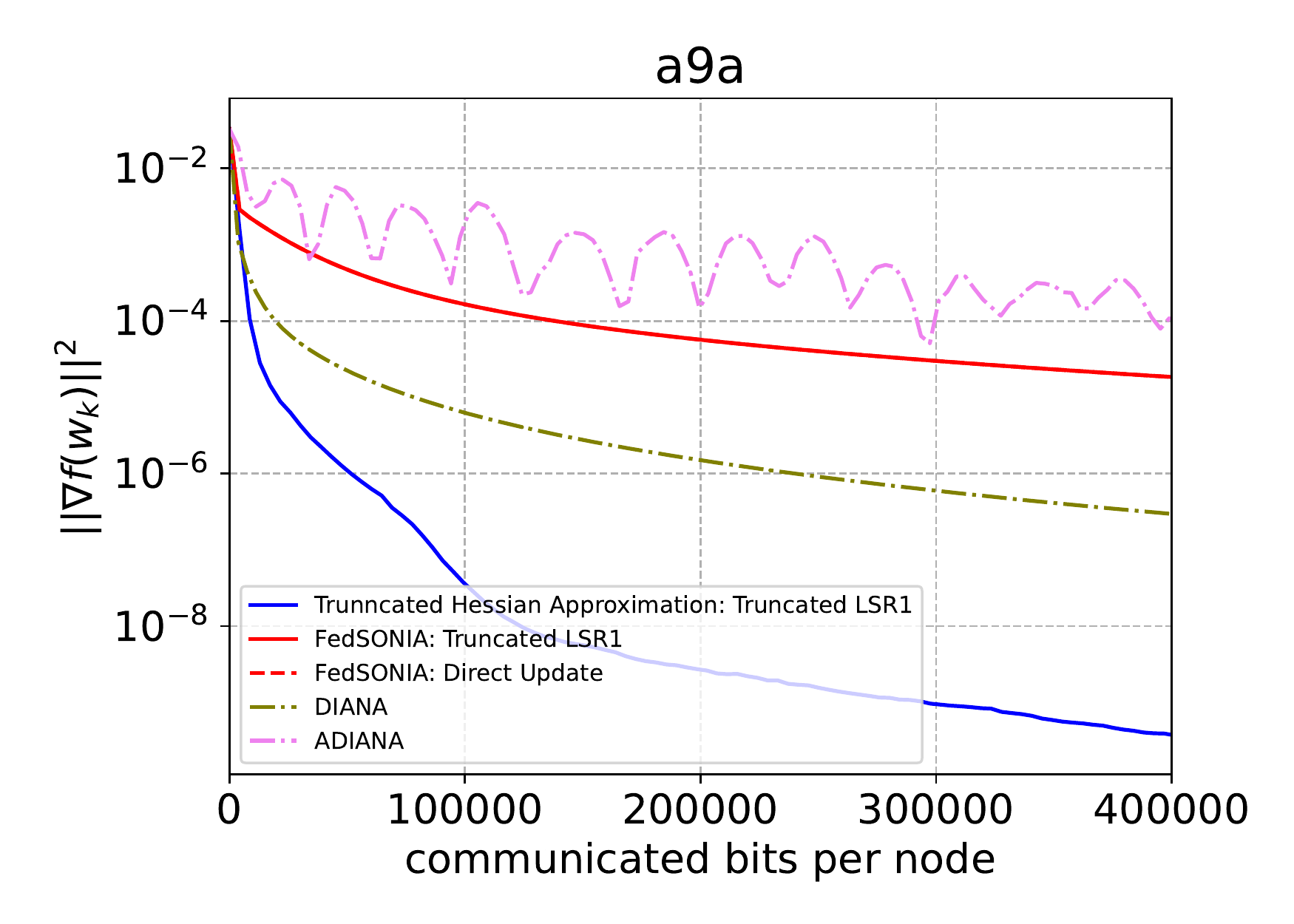}
    \includegraphics[width=0.31\textwidth]{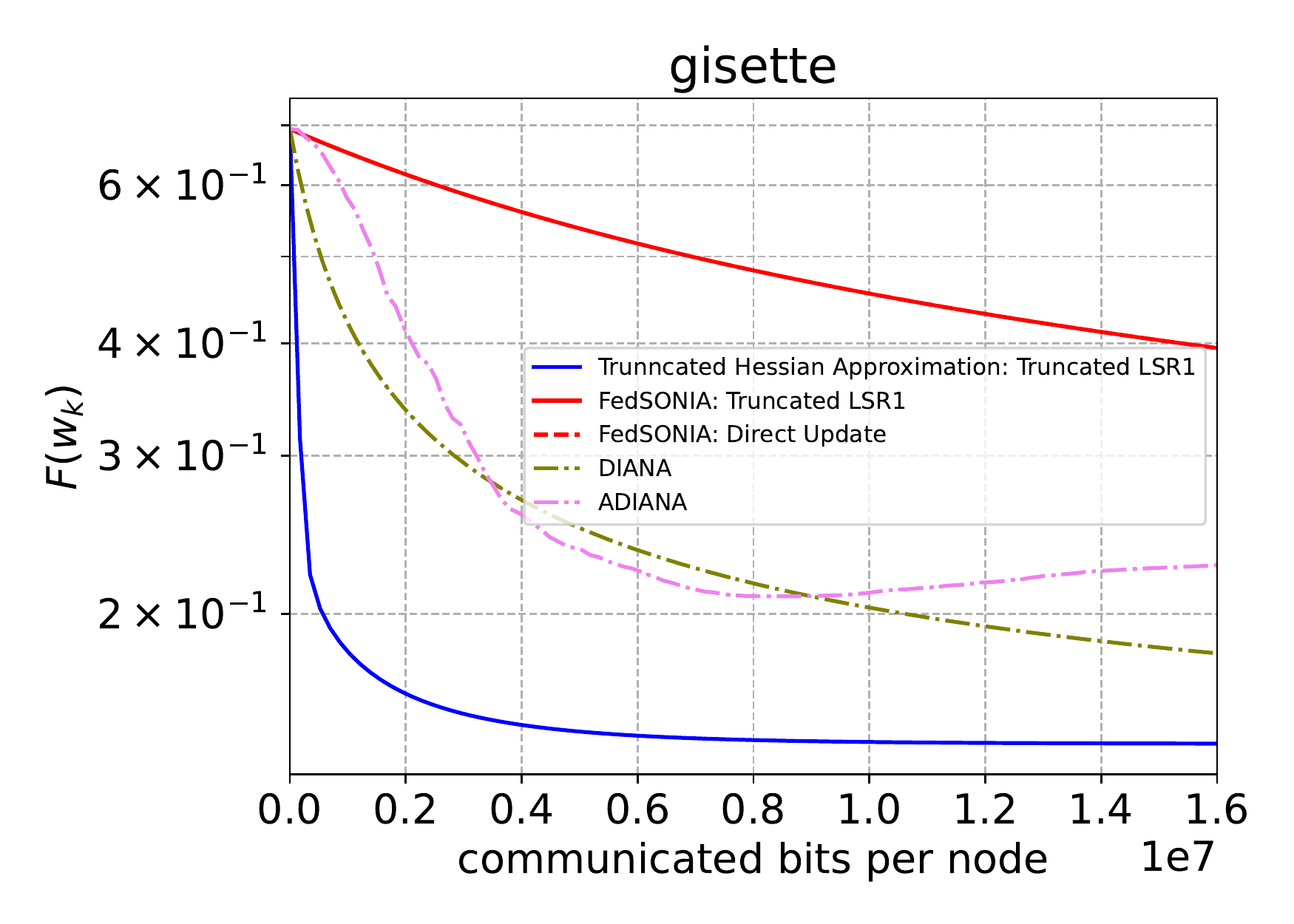}
    \includegraphics[width=0.31\textwidth]{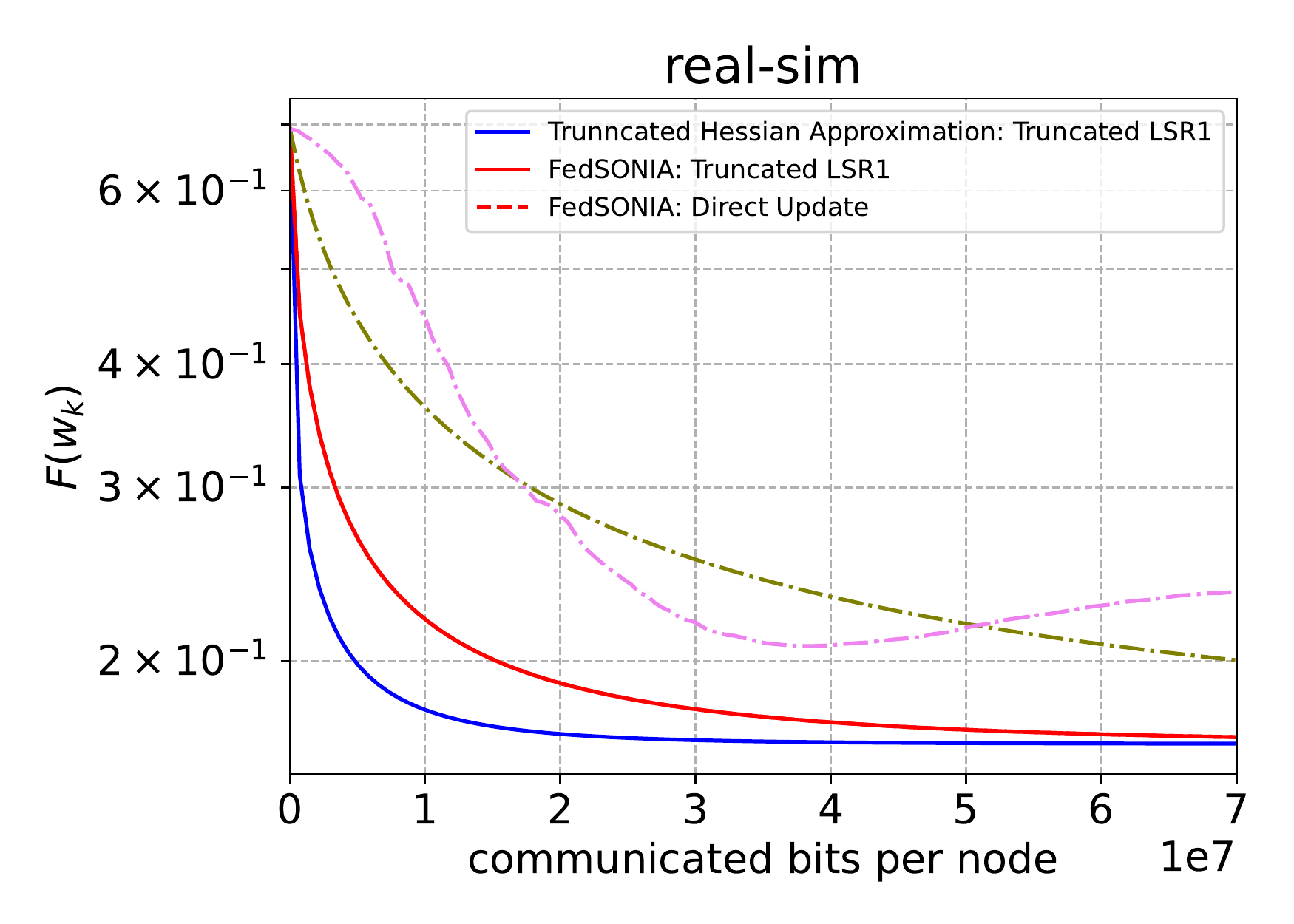}\\
    \includegraphics[width=0.31\textwidth]{experiments/mem1/a9a_1e-5_grad_diana-2.pdf}
    \includegraphics[width=0.31\textwidth]{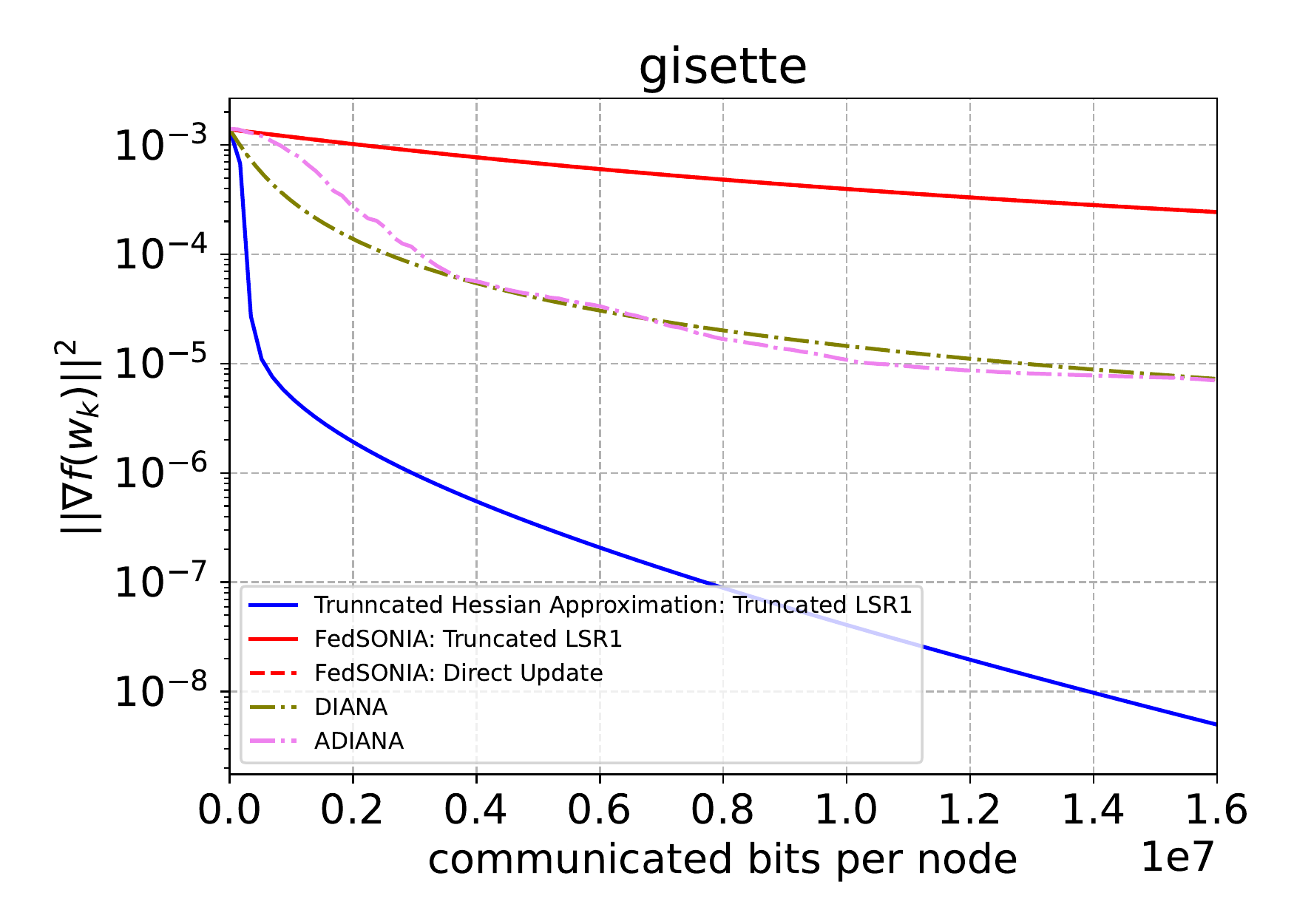}
    \includegraphics[width=0.31\textwidth]{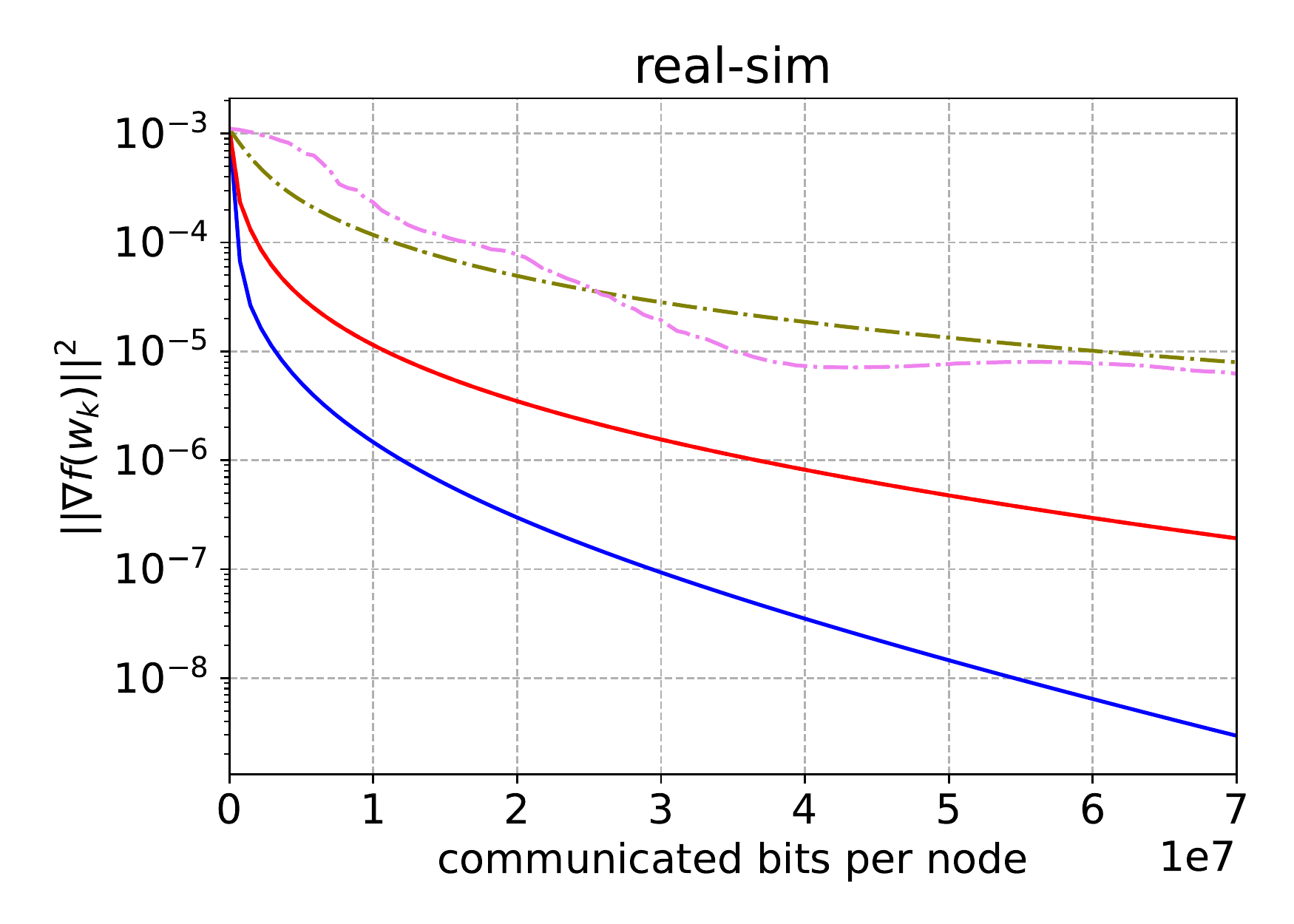}
    \vskip-10pt
    \caption{Comparison of objective function $F(w_k)$ and the squared norm of gradient $\|\nabla F(w_k)\|^2$ between DIANA, ADIANA and FLECS.}
\end{figure}

\subsection{Top-$K$ vs Random Dithering}
In this subsection we want to compare Top-$K$ compressor with random dithering for FLECS. We set hyperparametes as: $m=16$, $\beta=1$, $\alpha=1$ and initialize Hessian approximations with $\nabla^2 f(w_0)$. We use Top-$K$ with $k=4d$ and random dithering with
$s=128$ and $p=\infty$, so communication cost per iteration is roughly the same. In this experiment we omit initialization cost (computation of the full Hessian at starting point) for both algorithms. 
\begin{figure}[h]
    \includegraphics[width=0.45\textwidth]{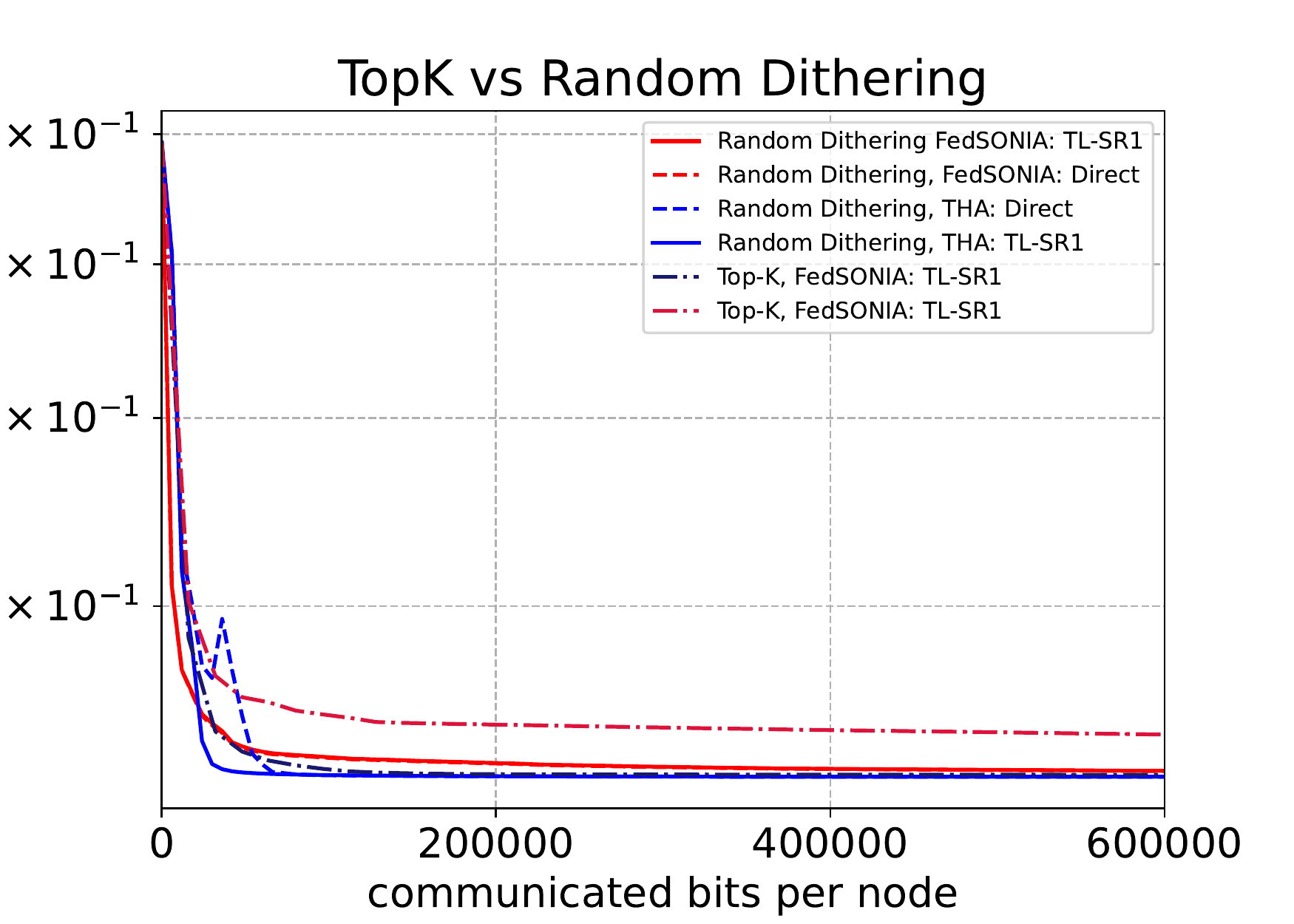}
    \includegraphics[width=0.45\textwidth]{experiments/took_vs_rd/topkvsrd_loss.pdf}
    \vskip-10pt
    \caption{Comparison of objective function $F(w_k)$ and the squared norm of gradient $\|\nabla F(w_k)\|^2$ .}
\end{figure}

\subsection{Dependence on the Memory Size}
Here we compare different memory sizes for FLECS on a9a dataset. We use random dithering with parameters $s=128$ and $p=\infty$. Hyperparametes set as $\beta=1$, $\alpha=1$ and initialize Hessian approximations initialized with $0$.
\begin{figure}[h]
    \includegraphics[width=0.45\textwidth]{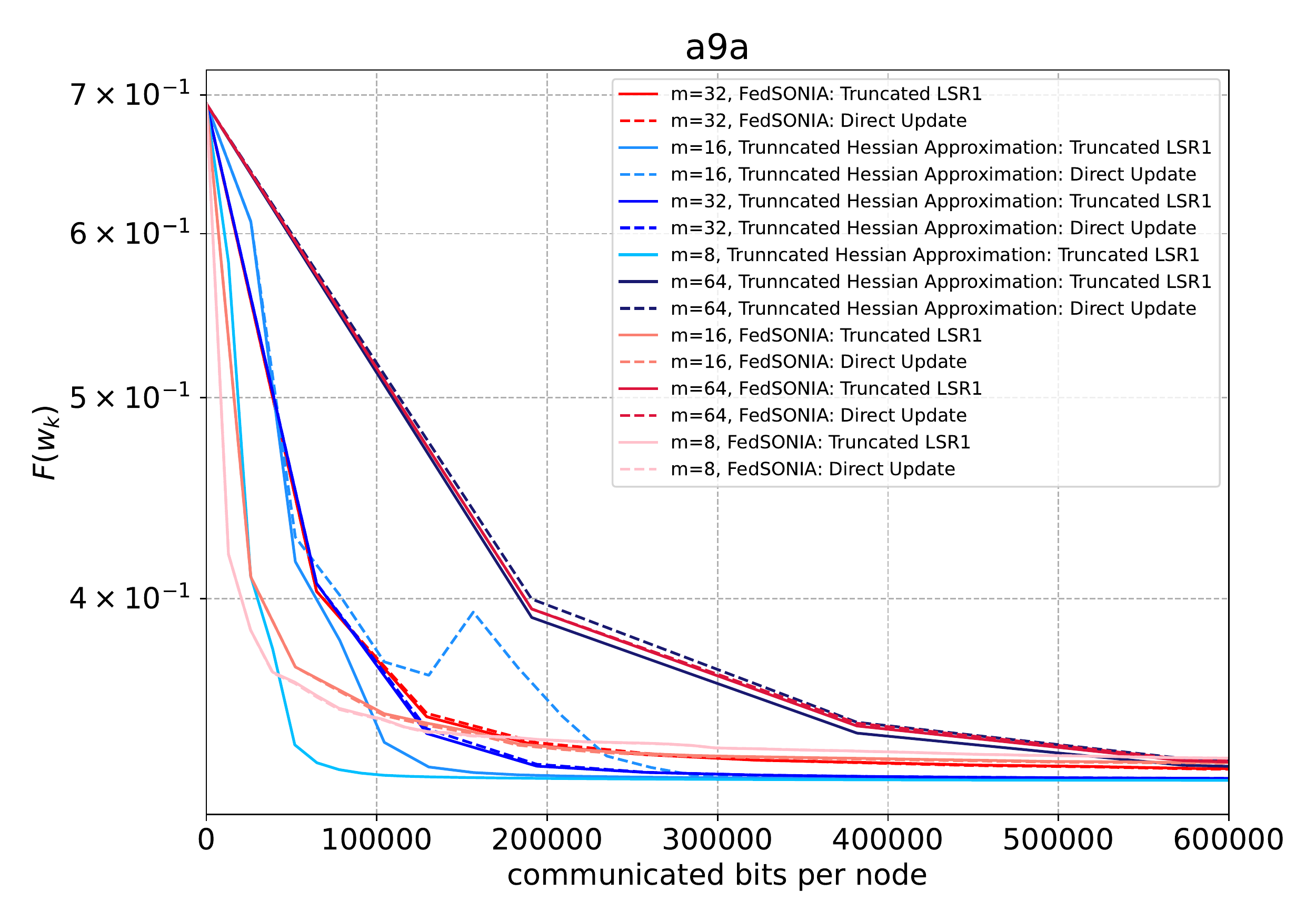}
    \includegraphics[width=0.45\textwidth]{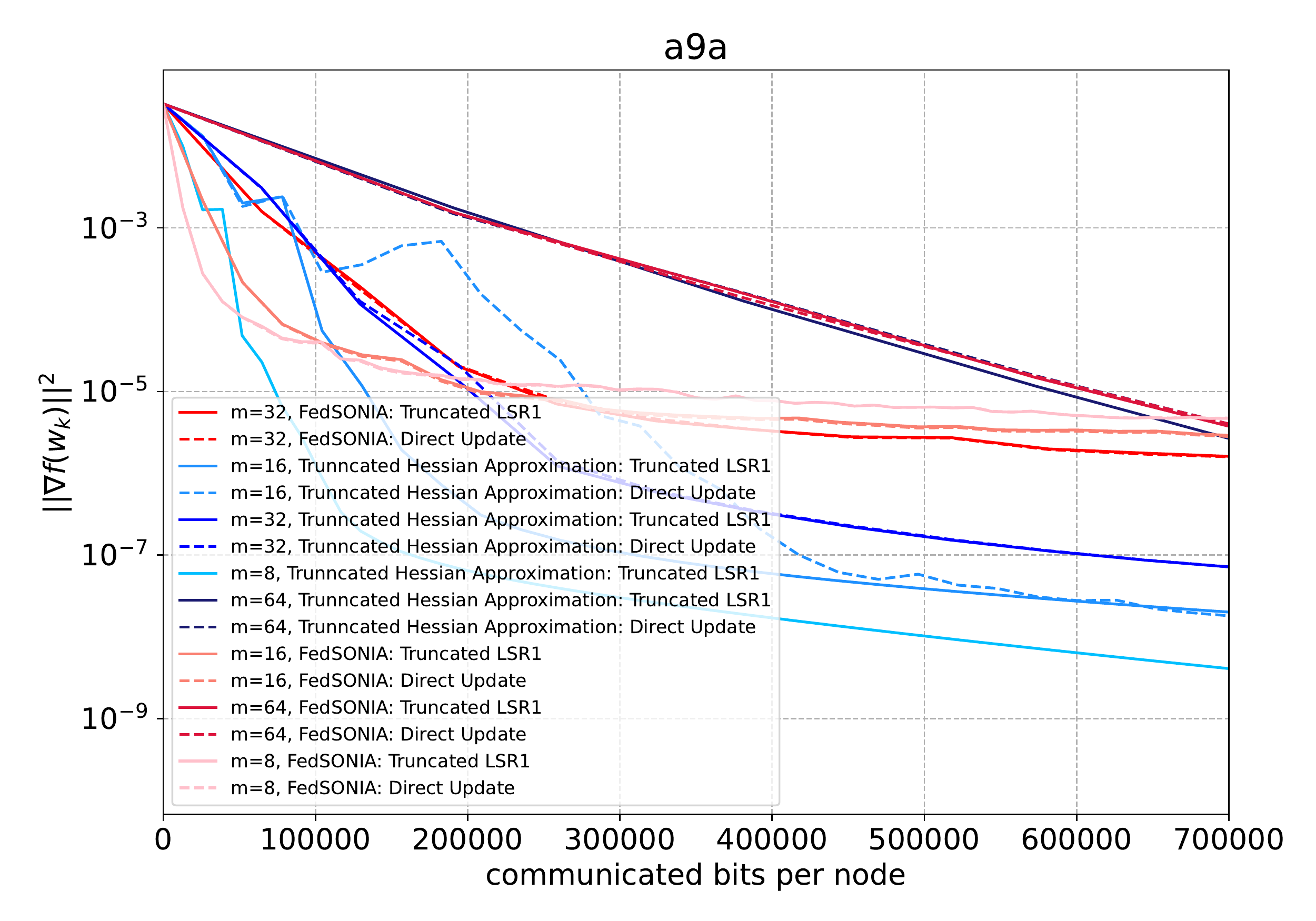}
    \vskip-10pt
    \caption{Comparison of objective function $F(w_k)$ and the squared norm of gradient $\|\nabla F(w_k)\|^2$ for different memory sizes.}
\end{figure}

\section{Compressors}\label{app:compressors}
FLECS allows several different compression operators. In this section we give definitions and provide examples of possible compressors for our framework.

\subsection{Unbiased Compressors.}
\begin{definition}
    A random operator $\C:~\R^{d\times m} \to \R^{d\times m}$ satisfying
    \begin{equation}\label{eq:quantization_def}
        \EE{\mathcQ}{\C(X)} = X, ~~ \EE{\C}{\|\C(X)\|_{\text{F}}^2} \leq (\omega + 1)\|X\|_{\text{F}}^2],
    \end{equation}
    for all $X \in \R^{d\times m}$ is a unbiased compressor operator.
\end{definition}

Random dithering for vectors ($\omega$-quantization) \cite{alistarh2017qsgd}, applied to each column of the matrix, is an example of unbiased compressors that we used in our experiments.

\subsection{Biased compressors.}
    \begin{definition}
    A random operator $\C:~\R^{d\times m} \to \R^{d\times m}$ satisfying
        \begin{equation}\label{eq:quantization_def}
            \EE{\C}{\C(X)} \leq \|X\|_\text{F}, ~~ \EE{\C}{\|\C(X)\|_{\text{F}}^2} \leq (1 - \delta)\|X\|_{\text{F}}^2],
        \end{equation}
        for all $X \in \R^{d\times m}$ is a contractive compressor operator.
    \end{definition}
    In our experiments we use Top-$K$ as a contractive compressor.
\end{document}